\documentclass[a4paper,final]{article}
\usepackage{amsmath}
\usepackage{amsthm}
\usepackage{amssymb}
\usepackage{graphicx}
\usepackage{multirow}
\usepackage{wrapfig}
\usepackage{float}
\usepackage{xcolor}
\usepackage{hyperref}
\usepackage{setspace}
\usepackage{mathtools}
\usepackage{algorithm}
\usepackage{algorithmic}
\renewcommand{\algorithmiccomment}[1]{\bgroup\hfill//~#1\egroup}
\usepackage{eurosym}
\usepackage[utf8]{inputenc}
\usepackage[english]{babel}
\usepackage{amsfonts}
\usepackage{authblk}
\usepackage{xcolor}
\usepackage[normalem]{ulem}
\newtheorem{theorem}{Theorem}

\newtheorem{lemma}[theorem]{Lemma}

\setlength{\oddsidemargin}{-0.5cm}
\setlength{\evensidemargin}{-0.75cm}
\setlength{\textwidth}{17 cm}
\setlength{\textheight}{23 cm}
\setlength{\topskip}{0.0cm}
\setlength{\topmargin}{0.6cm}
\setlength{\headheight}{0.0cm}
\setlength{\headsep}{0.0cm}
\setstretch{1.3}
\title{Logic-Based Benders for intermodal operations with delay penalties}

\author[1,*]{Ioannis Avgerinos}
\author[1]{Ioannis Mourtos}
\author[1]{Georgios Zois}
\affil[1]{\small ELTRUN Lab, Department of Management Science and Technology, Athens University of Economics and Business,\protect\\ \small Patision 76, 10434, Athens, Greece E-mail: {\tt iavgerinos@aueb.gr, mourtos@aueb.gr, georzois@aueb.gr}}

\affil[*]{Corresponding Author} 
\date{} 

\begin{document}
\maketitle

\begin{abstract}
Intermodal logistics typically include the successive stages of intermodal shipment and last-mile delivery. We investigate this problem under a novel Logic-Based Benders Decomposition, which exploits the staged nature of the problem to minimise  the sum of transport costs and delivery penalties. We establish the validity of our decomposition and apply effective optimality cuts. Apart from models and formal proofs, we provide extensive experimentation on random instances of considerable scale that shows the improvement achieved in terms of small gaps and shorter time compared to a monolithic MILP approach. Last, we propose a major extension of our generic method for a real logistics case. The implementation of the extension on real instances show the versatility of our method in terms of supporting different planning approaches thus leading to actual cost improvements. 
\end{abstract}

\textbf{Keywords.} Logistics, Benders decomposition, intermodal transportation. 

\section{Overview and Background}
The operation of functional distribution networks remains a major challenge in freight transportation, as indicated also by the COVID-19 outbreak, which required acute response to severe disruptions. A report of the International Finance Corporation of the World Bank Group \cite{IFC21} lists the closure of the borders between Germany and Poland and the shortage of truck drivers in India as two events that challenged the endurance of the logistics' providers. The increase of railway freight transports, as a response to the radical decrease of air freight capacity between China and Europe during the pandemic \cite{IFC21}, shows that using alternative modes of transport could tackle such challenges. In parallel, \emph{Direct Supply Chains} \cite{Men11} aim at dynamically shaping \textit{Supplier - Organization - Customer} routes that minimize both the operating costs and the delayed deliveries. As the \textit{last-mile} stage of the chain, in which the delay penalties are involved, is determined by the intermodal transports of the preceding \textit{middle-mile} stage, a proper coordination between these two stages is required.

In this study, we aim at minimizing the total transportation costs and delivery penalties of a logistics supply chain by an exact optimization method that finds provably near-optimal solutions in realistically-sized instances in reasonable time. For that to happen, the intermodal transportation and the last-mile deliveries must be simultaneously planned as heavier transport costs occur in the former stage, whereas delays are realised in the latter. A Mixed-Integer Linear Programming (MILP) model is intractable for a commercial Integer Programming solver and this is where our technical contribution comes forward. That is, we propose a Logic-Based Benders' decomposition (LBBD) \cite{Hook03, Hook07} of the two-staged problem, motivated by the recent LBBD literature on supply chain problems (e.g., \cite{Alfandari21, Taherkhani20}). In our LBBD, the so-called \emph{master} problem handles the intermodal transportation problem, whereas a set of \emph{subproblems} handles the last-mile delivery. Further aspects of interest include the introduction of appropriate optimality cuts plus subproblems' relaxation and valid inequalities, all aiming at strengthening the master problem's formulation. In particular, inspired by a modern concept of decomposition schemes, namely \emph{Partial Benders decomposition}, we retain information of the subproblems to the master problem, so that a faster convergence is achieved.

\textbf{Relevant Background.} \emph{Cross-docking} is a logistics concept which includes consolidation points for the orders, as intermediate stations between the supplying points and the customers. A comparison of the cost-effectiveness of Cross-Docking and the regular direct-shipment policy is provided by \cite{aueb17}, and a model that combines both approaches is described by \cite{Azi20}. Since Cross-Docking operations occur in our setting, we draw ideas from \cite{Yu16,Qiu21}. An extended survey on the inclusion of intermediate facilities in freight transport networks appears in \cite{Gua16}. If the network includes multiple Cross-Docking stations, the multi-depot Location-Routing problem of \cite{Wu02} and the multi-depot VRPTW of \cite{Li14} would be of value.

Intermodal networks have long been studied under an optimisation viewpoint \cite{Mach04}. This is not surprising because they include several alternative topologies \cite{Wox07} and network designs. Notably, our work considers a \textit{hub-and-spoke} layout, that is the direct shipment of orders from hubs to customers. In addition to the selection of the means of transport that will perform the shipment, a logistics supply chain must also consider the delays of the deliveries: indicatively \cite{Ish12} considers fixed arrival rates and transport times to compute the delays of the deliveries in a USA road-rail intermodal network. An intermodal network typically includes timetabled transport services of different modes (rail or ship), in which each service can be performed during multiple time windows in a planning horizon. As this is also the case in our study, we extend the approach of \cite{Moc11} that constructs identical copies of transport services that depart in different time instances. Time aspects are important also for the hub location problem in an intermodal network \cite{Alu12}.

Recently, \cite{Gha19} offer a heuristic for transferring hazardous materials through a network of railway or roadway nodes, thus solving a routing problem under stochastic scenarios. \cite{Fazi20} aim to connect inland hubs with ports through a network under strict time windows, so that a set of containers is shipped by the sea terminals. While the model of \cite{Fazi20} considers containers that are already loaded, the study of \cite{Li21} includes hubs that are also charged with the consolidation of orders to vehicles before the latter are routed through a multi-modal network. All three studies examine realistic instances that are significantly larger than the ones previously examined in the literature. Therefore, these papers suggest heuristic methods, since a commercial solver cannot respond in reasonable time. We are quite motivated by these developments, as our approach can handle instances of analogous size and of broader scope.

\begin{table}[tbh]
\centering
\resizebox{\textwidth}{!}{
\begin{tabular}{lll}
\hline
\textbf{Reference}            & \textbf{Description}                              & \textbf{Solution Approach}                         \\ \hline
Wu et al., 2002 \cite{Wu02}               & Multi-depot Vehicle Routing Problem              & Heuristic, TS                               \\
Rousseau et al., 2004 \cite{Mach04}         & Vehicle Routing Problem with Time Windows         & CG, CP            \\
Moccia et al., 2011 \cite{Moc11}          & Intermodal Transportation Problem                 & CG, BnC                    \\
Alumur et al., 2012 \cite{Alu12}           & Intermodal Hub Location Problem                   & MIP                                                  \\
Ishfaq \& Sox, 2012 \cite{Ish12}            & Intermodal Network Design                         & Heuristic, TS                               \\
Li et al., 2014 \cite{Li14}               & Multi-depot Vehicle Routing Problem               & GA, LS                      \\
Ghane-Ezabadi \& Vergara, 2016 \cite{Gha16} & Intermodal Network Design                         & Decomposition                                        \\
Yu et al., 2016 \cite{Yu16}               & Vehicle Routing Problem with Cross-Docking        & MIP, SA                             \\
Nikolopoulou et al., 2017 \cite{aueb17}     & Cross-Docking and Direct Shipment Comparison      & LS                                         \\
Wang \& Meng, 2017 \cite{Wang17}             & Intermodal Network Design                         & Nonlinear Algorithm, BnB                \\
Ghaderi \& Burdett, 2019 \cite{Gha19}            & Intermodal Transportation Problem                            & Heuristic                                \\
Mahéo et al., 2019 \cite{Mah19}            & Transit Network Design                            & BD                                \\
Azizi \& Hu, 2020 \cite{Azi20}              & Pickup and Delivery Problem with Location Routing & MIP, BD                           \\
Belieres et al., 2020 \cite{Bel20}              & Logistics Network Design & BD                           \\
Fazi et al., 2020 \cite{Fazi20}              & Pickup and Delivery Problem with Location Routing & Heuristic, TS                           \\
Avgerinos et al., 2021 \cite{Avg21}        & Intermodal Transportation Problem                 & BD                                \\
Fontaine et al., 2021 \cite{Fon21}        & Intermodal Transportation Problem                 & BD                                \\
Li et al., 2021 \cite{Li21}        & Intermodal Transportation Problem                 & Heuristic                                \\
Qiu et al., 2021 \cite{Qiu21}             & Vehicle Routing Problem with Cross-Docking        & BnC                                  
  \\ \hline \\
TS & Tabu Search & \\
CG & Column Generation & \\
CP & Constraint Programming & \\
BnC & Branch-and-Cut & \\
BnP & Branch-and-Price & \\
MIP & Mixed Integer Problem & \\
LS & Local Search & \\
GA & Genetic Algorithm & \\
SA & Simulated Annealing & \\
BnB & Branch-and-Bound &\\
BD & Benders Decomposition &
\end{tabular}
}
\caption{Summary of literature on Routing and Intermodal Transportation Problems}
\label{tab:literature}
\end{table}

The literature on intermodal transportation is enormous, yet the one summarized in Table \ref{tab:literature} shows the use of multiple optimization methods. Although heuristic methods remain of great value, the contemporary landscape reveals that 3PL provider compete on total cost, thus finding the optimal solution matters. Exact methods are known to be slow or inappropriate in terms of memory requirements, thus our work exploit the staged nature of our problem to apply a decomposition scheme. Decomposition-based methods are widely used in transportation problems, some of them being intermodal \cite{Fon21, Gha16}. The main framework of our exact approach for this paper will be Benders Decomposition \cite{Ben62} and, in fact, its logic-based extension \cite{Hook03} (LBBD). The closest study relying on LBBD is \cite{Mah19}, although LBBD has broad applicability \cite{Rah17}. Beyond modelling, the implementation of LBBD is a non-trivial task as its success depends on the integration of valid cuts and also the combination of MILP and Constraint Programming (CP) formulations, as nicely discussed in \cite{Nutmeg20}. Recently, the concept of \emph{Partial Benders Decomposition}, that is addition of information derived from the subproblem to strengthen the master problem, has been implemented on stochastic network design problems \cite{Cra21}. As \cite{Bel20} and \cite{Fon21} also did for their presented logistics network design problems, we implement this idea on our decomposition setting.

\textbf{Our contribution.} To the best of our knowledge, this is the first paper offering an exact approach that considers together the intermodal transportation stage and the last-mile delivery of a Direct Supply Chain. Unifying these two stages allows us to consider the delayed delivery penalties, which are significantly determined by the preceding intermodal transportation, but nevertheless charged during the last-mile delivery.

From an application-oriented standpoint, we experiment with large-scale randomly generated instances, with $60-160$ orders. These orders are transferred through a network of $7$ hubs, connected with hundreds of intermodal services. Let us note that \cite{Gha16} solve instances of $150$ orders, consolidated in $4$ hubs and transferred by combinations of $3$ modes per connected pair of hubs, while \cite{Li21} address the shipment of $300$ orders to a network of $20$ nodes; \cite{Fazi20} consider datasets of $600$ containers, nevertheless their network is unimodal. Thus, our instances are comparable with the largest in the intermodal transportation literature. In addition, we expand the scope of the typical intermodal network planning problem, by considering penalties of delays in the objective function.

To evaluate the efficiency of our method under the additional complications imposed by a real-life application, we extend our formulations to an actual case of a 3PL provider in Europe. This case integrates all three stages of a logistics supply chain (i.e., \textit{first-mile}, \textit{middle-mile}, \textit{last-mile}). For the last-mile stage, two alternative configurations are used. Our method exploits the fact that the practical restrictions of the problem allow us to pre-compute fixed routes for the first and last-mile stages, thus modelling the corresponding vehicle routing problems as exact-covering or resource-constrained scheduling problems. The extended LBBD algorithm is evaluated, after being implemented on real instances, given by the logistics provider. This paper is a strongly enhanced generalisation of \cite{Avg21}, whose experiments on smaller instances show already that a commercial solver cannot handle the entire problem thus making the decomposition necessary.

\section{Problem definition and notation} \label{Section:Model}
\subsection{Problem definition}
Let $I$ be the set of nodes of an intermodal network. Subset $J\subset I$ includes the \emph{Distribution Centers hubs} (\emph{DC hubs}), in which orders $N$ are consolidated into loading compartments $G$. Each compartment is dedicated to one DC hub, hence set $G$ is divided into subsets $G_{j}, j\in J$. Subset $L\subset I$ includes \emph{Satellite} hubs, in which orders are brought to before their final destination. The rest of the nodes of set $I$, namely \emph{intermediate hubs}, are labelled as $K\subset I$.

After the completion of the consolidation process, all loaded compartments must be shipped to any $Satellite$ hub $l\in L$, via an intermodal services network. We consider three transport modes (i.e., roadway, railway, and seaway modes) and a set of scheduled transport services, denoted by $S$. Each service $s\in S$ is featured with an origin node $origin_{s}\in I$, a destination node $dest_{s}\in I$, a fixed time of departure and arrival ($departure_{s}$ and $arrival_{s}$ respectively), a variable cost $c^{travel}_{s}$, which is charged for each compartment that is shipped by service $s$, a fixed cost $c^{fixed}_{s}$, and a capacity (in number of compartments) $Q_{s}$. For each node $i \in I,$ we distinguish the services arriving in and departing from it, by introducing the sets $\delta^-(i)$ and $\delta^+(i)$, respectively. Last, each service is linked with a subset of invalid co-assignments $F_{s}\subset S$, that is the set of services that cannot be added in the same intermodal route with $s$. Each subset $F_{s}$ includes all services $p\in S\setminus \{s\}$ that:
\begin{description}
\item{$a.$} depart or arrive during service $s$ (i.e., $departure_{s} \leq departure_{p} < arrival_{s}$ or $departure_{s} < arrival_{p} \leq arrival_{s}$),
\item{$b.$} start from the destination of $s$ before the completion of $s$ (i.e., $origin_{p} = dest_{s}$ and $departure_{p} < arrival_{s}$), or
\item{$c.$} arrive to the origin of $s$ later than the departure of $s$ (i.e., $dest_{p} = origin_{s}$ and $arrival_{p} > departure_{s}$).
\end{description}
The solution of the intermodal transportation stage is a set of combined services, that start from a \textit{DC hub} and terminate to a \textit{Satellite hub}. Each service is linked with a set of assigned compartments. Note that, despite the dedication of each order or compartment to a \textit{DC hub}, there are no pre-assignments to \textit{Satellite hubs}, hence each loaded compartment could be transferred to any \textit{Satellite hub} by eligible services, regardless of its components.

As the problem requires the assignment of loaded compartments to services of different capacity and cost, the intermodal transportation stage is structured as a Generalized Assignment Problem, which is known to be NP-hard \cite{GAP86}. The complexity arises from orders being loaded to compartments, which must then be transferred through the intermodal network at a minimum `flow' cost.

Concerning now the last-mile stage, each order $n\in N$ is featured with a deadline $t_{n}$ and a value of weight $w_{n}$, which is related with the importance of the order. The deadlines are not strict, nevertheless their violation implies a penalty cost, equal with the number of days of delayed delivery, multiplied by the weight value of the order. To comply with the properties of \textit{hub-and-spoke} networks, in which all orders are delivered in a single route, we consider a TSP for each used service $s\in \{S|destination_{s}\in L\}$, starting from $dest_{s}\in L$ and traversing through the delivery points of all orders that were shipped by service $s$.

As the objective function of each TSP includes the total transportation costs and the delay penalties, the last-mile stage is structured as a set of multiple Single-Machine Scheduling Problems with sequence-dependent times, aiming to minimize the total weighted tardiness, which are known to be strongly NP-hard \cite{Law77}.

An illustrative example of the intermodal network, the available services that connect the hubs, and the processes that are conducted in each stage of the problem is presented in Figure \ref{fig:network}.

\begin{figure}[h]
\begin{center}\includegraphics[scale=0.5]{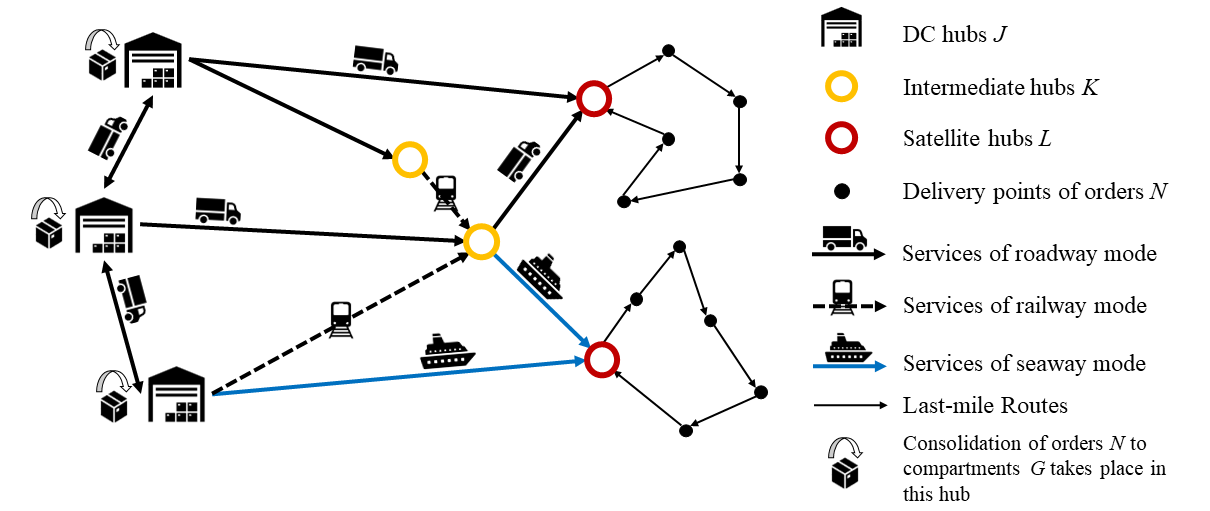}
\caption{A sketch of our network}\label{fig:network}
\end{center}
\end{figure}

\subsection{Mathematical Model}
Let $\mathcal{P}$ be an MILP formulation for the unified optimization problem that was described above. Parameters and variables of $\mathcal{P}$ are as in Table \ref{tab:notation}.

\begin{table}[H]
\centering
\caption{Sets, parameters and variables}
\resizebox{\textwidth}{!}{
\begin{tabular}{ll}
\hline
\textbf{Sets}                &\\ \hline
$I$                 & Set of nodes\\
$J$                 & Set of DC hubs, $J\subset I$\\
$K$                 & Set of Intermediate hubs, $K\subset I$\\
$L$                 & Set of Satellite hubs, $L\subset I$\\
$N$                 & Set of orders\\
$N_{j}$             & Set of orders that will be consolidated in DC hub $j\in J$, $N_{j}\subset N$\\
$G$                 & Set of loading compartments\\
$G_{j}$             & Set of compartments that are dedicated to DC hub $j\in J$, $G_{j}\subset G$\\
$S$                 & Set of scheduled intermodal services\\
$S_{L}$             & Set of services, arriving to a Satellite hub; $S_{L}=\{s\in S|dest_{s}\in L\}$\\
$\delta^+(i)$     & Set of services that depart from node $i\in I$,   $\delta^+(i)\subset S$, $origin_{s} = i$\\
$\delta^-(i)$     & Set of services that arrive to node $i\in I$,   $\delta^-(i)\subset S$, $dest_{s} = i$\\
$F_{s}$             & Set of forbidden co-assignments of service $s$, $F_{s}\subset S$\\ \hline
\multicolumn{2}{l}{\textbf{Parameters}}\\ \hline
$q$                 & Capacity of compartment $g\in G$ (identical for all compartments)\\
$d_{n}$             & Quantity of order $n\in N$\\
$t_{n}$             & Due date of order $n\in N$\\
$w_{n}$             & Weight (importance) of order $n\in N$\\
$Q_{s}$             & Number of compartments that service $s\in S$ carries at most\\
$c^{travel}_{s}$    & Travel cost per compartment for service $s\in S$\\
$c^{fixed}_{s}$     & Fixed cost of service $s\in S$\\
$departure_{s}$     & Time of departure of service $s\in S$\\
$arrival_{s}$       & Time of departure of service $s\in S$\\
$origin_{s}$        & Origin node of service $s\in S$, $origin_{s}\in I\setminus L$\\
$dest_{s}$   & Destination node of service $s\in S$, $dest_{s}\in I$\\
$M$                 & A big numeric value\\
$c_{nm}$            & Distance cost between the delivery points of $n, m\in N$\\ 
$c_{ln}$            & Distance cost between the delivery point of $n\in N$ and satellite $l\in L$\\ \hline
\textbf{Variables}          &\\ \hline
$x_{gs}$            & 1 if compartment $g$ is assigned to service $s$, 0 otherwise\\
$y_{ng}$            & 1 if order $n$ is assigned to compartment $g$, 0 otherwise\\
$h_{ns}$            & 1 if order $n$ is shipped by service $s$, 0 otherwise\\
$e_{g}$             & 1 if compartment $g\in G$ is used, 0 otherwise\\
$E_{g}$             & The quantity that is assigned to compartment $g\in G$\\
$v_{s}$             & 1 if service $s\in S$ is used, 0 otherwise\\
$V_{s}$             & The number of compartments that are assigned to service $s\in S$\\ 
$\gamma_{snm}$      & 1 if the delivery point of $n\in N$ precedes the delivery point of $m\in N$, in the last-mile route that succeeds $s\in S_{L}$, 0 otherwise\\
$\gamma^{+}_{sln}$  & 1 if the delivery point of order $n\in N$ is the first node of the last-mile route that succeeds $s\in \delta^-(l)$, 0 otherwise\\ 
$\gamma^{-}_{sln}$  & 1 if the delivery point of order $n\in N$ is the last node of the last-mile route that succeeds $s\in \delta^-(l)$, 0 otherwise\\
$C_{sn}$            & Delivery time of order $n\in N$ in the last-mile route that succeeds $s\in S_{L}$\\
$C^{max}_{s}$       & Completion time of the last-mile route that succeeds $s\in S_{L}$\\
$T_{n}$             & Number of days of delayed delivery of order $n\in N$\\ \hline      
\end{tabular}
}
\label{tab:notation}
\end{table}

The objective function of $\mathcal{P}$ is the minimum sum of intermodal transportation costs, last-mile transportation costs and delay penalties. Constraints (\ref{eq:1}) assign each order to exactly one loading compartment. Note that $y_{ng} = 0$ if order $n$ and compartment $g$ are located to different DC hubs $j\in J$. Constraints (\ref{eq:2}) define the total quantity that is loaded to each compartment, using an integer variable $E_{g}$, which is bounded by capacity $q$. Constraints (\ref{eq:3}) ensure that if any order is assigned to a compartment $g\in G$, then $g$ is used. Constraints (\ref{eq:2})-(\ref{eq:3}) are inspired by the valid inequalities of \cite{Aar98}, which replace regular capacity constraints to tighten formulations for facility location problems \cite{Ave21}.

\begingroup
\scriptsize \selectfont \setlength{\abovedisplayskip}{0pt}\setlength{\belowdisplayskip}{0pt}
\begin{flalign}
\text{$\mathcal{P}$:} \notag \\
\text{min } &\sum_{s\in S}(c^{travel}_{s}\cdot V_{s} + c^{fixed}_{s}\cdot v_{s}) + \sum_{s\in S_{L}}C^{max}_{s} + \sum_{n\in N}w_{n}\cdot T_{n} && \notag &&\\
&\text{subject to:} && \notag &&\\
&\sum_{g\in G} y_{ng} = 1 && \forall g\in G \label{eq:1}&&\\
&E_{g} = \sum_{n\in N} d_{n}\cdot y_{ng} && \forall g\in G \label{eq:2}\\
&e_{g} \geq y_{ng} && \forall g\in G, n\in N \label{eq:3}\\
&\sum_{s\in \delta^+(j)}x_{gs} = e_{g} && \forall j\in J, g\in G_{j} \label{eq:4}\\
&\sum_{s\in \delta^+(i)}x_{gs} - \sum_{s\in \delta^-(i)}x_{gs} = 0 && \forall g\in G, i\in I\setminus L\cup \{j\} = \{J|g\in G_{j}\} \label{eq:5}\\
&\sum_{l\in L}\sum_{s\in \delta^-(l)}x_{gs} = e_{g} && \forall g\in G \label{eq:6}\\
&x_{gs} + x_{gt} \leq 1 && \forall s\in S, t\in F_{s} \label{eq:7}\\
&V_{s} = \sum_{g\in G}x_{gs} && \forall s\in S \label{eq:8}\\
&v_{s} \geq x_{gs} && \forall s\in S, g\in G \label{eq:9}\\
&h_{ns} + 1 \geq x_{gs} + y_{ng} && \forall n\in N, g\in G, s\in S_{L} \label{eq:10}\\
&\sum_{n\in N}\gamma^{+}_{sln} = v_{s} && \forall l\in L, s\in \delta^-(l) \label{eq:11}\\
&\sum_{n\in N}\gamma^{-}_{sln} = v_{s} && \forall l\in L, s\in \delta^-(l) \label{eq:12}\\
&\sum_{m\in N}\gamma_{snm} + \gamma^{-}_{sln} = h_{ns} && \forall l\in L, s\in \delta^-(l), n\in N \label{eq:13}\\
&\sum_{m\in N}\gamma_{smn} + \gamma^{+}_{sln} = h_{ns} && \forall l\in L, s\in \delta^{-(l)}, n\in N \label{eq:14}\\
&\gamma^{+}_{sln} \leq h_{ns} && \forall n\in N, l\in L, s\in \delta^-(l) \label{eq:15}\\
&\gamma^{-}_{sln} \leq h_{ns} && \forall n\in N, l\in L, s\in \delta^-(l) \label{eq:16}\\
&\gamma_{snm} \leq h_{ns} && \forall n, m\in N, s\in S_{L} \label{eq:17}\\
&\gamma_{snm} \leq h_{ms} && \forall n, m\in N, s\in S_{L} \label{eq:18}\\
&C_{sn} + c_{nm} - C_{sm} \leq M\cdot (1 - \gamma_{snm}) && \forall n, m\in N, s\in S_{L} \label{eq:19}\\
&arrival_{s}\cdot h_{ns} + c_{ln} - C_{sn} \leq M\cdot (1 - \gamma^{+}_{sln}) && \forall l\in L, s\in \delta^-(l), n\in N \label{eq:20}\\
&T_{n} \geq \frac{C_{sn} - t_{n}}{24} && \forall n\in N, s\in S_{L} \label{eq:21}\\
&C^{max}_{s} = \sum_{n\in N}\sum_{m\in N\setminus n} c_{nm}\cdot \gamma_{snm} + \sum_{n\in N}c_{ln}\cdot \gamma^{+}_{sln} && \forall l\in L, s\in \delta^-(l) \label{eq:22}\\
& && \notag \\
&x_{gs} \in \{0, 1\} && \forall g\in G, s\in S \notag &&\\
&y_{ng} \in \{0, 1\} && \forall g\in G, n\in N \notag &&\\
&e_{g} \in \{0, 1\} && \forall g\in G \notag &&\\
&0\leq E_{g}\leq q, E_{g}\in \mathbb{Z} && \forall g\in G \notag &&\\
&v_{s} \in \{0, 1\} && \forall s\in S \notag &&\\
&0\leq V_{s}\leq Q_{s}, V_{s}\in \mathbb{Z} && \forall s\in S \notag &&\\
&h_{ns} \in \{0, 1\} && \forall n\in N, s\in S_{L} \notag &&\\
&\gamma_{snm}\in \{0, 1\} && \forall s\in S_{L}, n\in N, m\in N \notag &&\\
&\gamma^{+}_{sln}\in \{0, 1\} && \forall l\in L, s\in \delta^-(l), n\in N \notag &&\\
&\gamma^{-}_{sln}\in \{0, 1\} && \forall l\in L, s\in \delta^-(l), n\in N \notag &&\\
&C_{sn}\geq 0 && \forall n\in N, s\in S_{L} \notag &&\\
&C^{max}_{s}\geq 0 &&\forall s\in S_{L} \notag &&\\
&T_{n}\geq 0, T_{n}\in \mathbb{Z} &&\forall n\in N \notag &&\\
& && \notag &&
\end{flalign}
\endgroup

Constraints (\ref{eq:4})-(\ref{eq:6}) are flow conservation inequalities. Constraints (\ref{eq:4}) ensure that each used compartment will be assigned to one service that departs from its dedicated DC hub. If compartment $g$ is assigned to a service that arrives to a hub, which is not a Satellite or its dedicated DC hub, then it must be assigned to another service that departs from it, by Constraints (\ref{eq:5}). Each used compartment must be shipped to any Satellite hub (\ref{eq:6}). To ensure that the constructed intermodal routes are valid, if any compartment is assigned to a service, then it cannot be assigned to any forbidden co-assignment (\ref{eq:7}). The number of assigned compartments for each service is determined by Constraints (\ref{eq:8}), and, if any compartment is assigned to a service, then the latter is considered as used (\ref{eq:9}).

Constraints (\ref{eq:10}) ensure that if order $n$ is assigned to a compartment $g$ which is shipped by service $s$, then $n$ will be assigned to $s$ too. (\ref{eq:10}) are redundant for the intermodal transportation stage, nevertheless they are required to ensure the connectivity with the succeeding last-mile stage. By Constraints (\ref{eq:15}) and (\ref{eq:18}), we notice that each delivery point will be a part of the last-mile route that succeeds a service $s$, only if the respective order was shipped by $s$. Constraints (\ref{eq:11}) and (\ref{eq:12}) ensure that the last-mile route that succeeds used services $s$ will start from $dest_{s}\in L$ and will return to it. Constraints (\ref{eq:13}) and (\ref{eq:14}) ensure that the delivery points of all assigned orders will be a part of the route. The delivery times of orders are determined by Constraints (\ref{eq:19}), if the delivery point is an intermediate node of the route, or Constraints (\ref{eq:20}), if the delivery point is the first node of the route (after the starting satellite hub). Constraints (\ref{eq:21}) define the number of days of delay and Constraints (\ref{eq:22}) define the total transportation cost of the route.

\section{The Decomposition} \label{Section:LBBD}
LBBD \cite{Hook03, Hook07} defines a master problem that feeds a subproblem (or a group of subproblems) with a fixed solution. The solution of the subproblem(s) is an upper bound of the original optimization problem and implies the construction of inequalities, which are known as \textit{cuts}. These cuts are added to a \textit{master} problem, which provides a valid lower bound of the original problem. If the subproblem is infeasible, the \textit{feasibility cuts} restrict the master problem from computing the same fixed solution that implied the infeasibility of the subproblem. If the subproblem computes a feasible upper bound, the \textit{optimality cuts} ensure that the master problem will compute the same fixed solution only if a better one cannot be found. The solution of the master problem will be the next fixed solution that will be fed to the subproblem(s) in the next iteration, until a convergence is reached. A valid Logic-Based Benders Decomposition ensures that the lower and the upper bound will converge to the optimal solution of the original problem.

Let $\mathcal{P} = min \{f(\textsc{x}) + g(\textsc{y})|\textsc{x}\in D_{\textsc{x}}, \textsc{y}\in D_{\textsc{y}}\}$ be the formulation of an optimization problem. We consider two groups of variables $\textsc{x}$ and $\textsc{y}$ and their respective domains $D_{\textsc{x}}$ and $D_{\textsc{y}}$. $f(\textsc{x})$ and $g(\textsc{y})$ are linear cost functions that concern variables $\textsc{x}$ and $\textsc{y}$ respectively. Problem $\mathcal{P}$ is decomposed into the master problem $\mathcal{M} = min \{z|z\geq f(\textsc{x}), \textsc{x}\in D_{\textsc{x}}\}$ and the subproblem(s) $\mathcal{S} = min \{g(\textsc{y}) + f(\hat{\textsc{x}})|\textsc{y}\in D_{\textsc{y}}\}$, in which $z$ is the upper bound of function $f(\textsc{x})$ and  $\hat{\textsc{x}}$ is a feasible solution of $\mathcal{M}$.

\subsection{The master problem}
The master problem $\mathcal{M}$ formulates the intermodal transportation stage. Its objective is given by a continuous variable $z$ that is bounded from below by the sum of travel and fixed costs of used services (\ref{eq:Mobj}). We notice that $\mathcal{M}$ includes Constraints (\ref{eq:1})-(\ref{eq:10}) of $\mathcal{P}$:

\begingroup
\scriptsize \selectfont \setlength{\abovedisplayskip}{0pt}\setlength{\belowdisplayskip}{0pt}
\begin{flalign}
\text{$\mathcal{M}$:} \notag \\
\text{min } &z && \notag &&\\
&\text{subject to:} && \notag &&\\
&\sum_{g\in G} y_{ng} = 1 && \forall g\in G \notag &&\\
&E_{g} = \sum_{n\in N} d_{n}\cdot y_{ng} && \forall g\in G \notag \\
&e_{g} \geq y_{ng} && \forall g\in G, n\in N \notag \\
&\sum_{s\in \delta^+(j)}x_{gs} = e_{g} && \forall j\in J, g\in G_{j} \notag \\
&\sum_{s\in \delta^+(i)}x_{gs} - \sum_{s\in \delta^-(i)}x_{gs} = 0 && \forall g\in G, i\in I\setminus L\cup \{j\} = \{J|g\in G_{j}\} \notag \\
&\sum_{l\in L}\sum_{s\in \delta^-(l)}x_{gs} = e_{g} && \forall g\in G \notag \\
&x_{gs} + x_{gt} \leq 1 && \forall s\in S, t\in F_{s} \notag \\
&V_{s} = \sum_{g\in G}x_{gs} && \forall s\in S \notag \\
&v_{s} \geq x_{gs} && \forall s\in S, g\in G \notag \\
&h_{ns} + 1 \geq x_{gs} + y_{ng} && \forall n\in N, g\in G, s\in S_{L} \notag \\
&z \geq \sum_{s\in S}(c^{travel}_{s}\cdot V_{s} + c^{fixed}_{s}\cdot v_{s}) && \label{eq:Mobj}\\
& && \notag \\
&x_{gs} \in \{0, 1\} && \forall g\in G, s\in S \notag &&\\
&y_{ng} \in \{0, 1\} && \forall g\in G, n\in N \notag &&\\
&e_{g} \in \{0, 1\} && \forall g\in G \notag &&\\
&0\leq E_{g}\leq q, E_{g}\in \mathbb{Z} && \forall g\in G \notag &&\\
&v_{s} \in \{0, 1\} && \forall s\in S \notag &&\\
&0\leq V_{s}\leq Q_{s}, V_{s}\in \mathbb{Z} && \forall s\in S \notag &&\\
&h_{ns} \in \{0, 1\} && \forall n\in N, s\in S_{L} \notag &&\\
& && \notag &&
\end{flalign}
\endgroup

Let $\textsc{x} = \{x_{gs}, e_{g}, E_{g}, v_{s}, V_{s}, y_{ng}, h_{ns}\}$ be the group of variables of $\mathcal{M}$ and $f(\textsc{x})$ the right-hand side of (\ref{eq:Mobj}).

\subsection{A family of subproblems}
Given a solution of $\mathcal{M}$, namely $\hat{\textsc{x}} = \{\hat{x}_{gs}, \hat{e}_{g}, \hat{E}_{g}, \hat{v}_{s}, \hat{V}_{s}, \hat{y}_{ng}, \hat{h}_{ns}\}$, the subproblem $\mathcal{S}$ computes the optimal last-mile routing. As a \textit{hub-and-spoke} layout is considered, each used service $s\in \{S_{L}|\hat{v}_{s}=1\}$ is followed by a single route that starts from $dest_{s}$ and includes the delivery points of all orders $n\in N$ for which $\hat{h}_{ns} = 1$.

As each route is independent, we distinguish a different subproblem $\mathcal{S}^{s}$ for each $s\in \{S_{L}|\hat{v}_{s}=1\}$. Despite the fact that Single-Machine Scheduling problems with sequence-dependent times are NP-hard, considering separate subproblems for each route allows us to split set $N$ into smaller, tractable by exact methods that related literature offers, instances. For our decomposition scheme, we opt for a variation of the MILP that \cite{Pey20} present, adapted on a due-date related objective function of single-machine settings. Therefore, for each service $s\in \{S_{L}|\hat{v}_{s}=1\}$, let $N_{s} = \{N|\hat{h}_{ns} = 1\}$ be the set of shipped orders and $\mathcal{S}^s$ be the respective subproblem:

\begingroup
\scriptsize \selectfont \setlength{\abovedisplayskip}{0pt}\setlength{\belowdisplayskip}{0pt}
\begin{flalign}
\text{$\mathcal{S}^{s}$:} \notag \\
\text{min } &\zeta_{s} && \notag &&\\
&\text{subject to:} && \notag &&\\
&\sum_{n\in N_{s}}\gamma^{+}_{ln} = 1 && l = dest_{s} \label{eq:s1}\\
&\sum_{n\in N_{s}}\gamma^{-}_{ln} = 1 && l = dest_{s} \label{eq:s2}\\
&\sum_{m\in N_{s}}\gamma_{nm} + \gamma^{-}_{ln} = 1 && \forall n\in N_{s}, l=dest_{s} \label{eq:s3}\\
&\sum_{m\in N_{s}}\gamma_{mn} + \gamma^{+}_{ln} = 1 && l\in dest_{s}, n\in N_{s} \label{eq:s4}\\
&C_{n} + c_{nm} - C_{m} \leq M\cdot (1 - \gamma_{nm}) && \forall n, m\in N_{s} \label{eq:s5}\\
&arrival_{s} + c_{ln} - C_{n} \leq M\cdot (1 - \gamma^{+}_{ln}) && \forall n\in N_{s}, l=dest_{s} \label{eq:s6}\\
&T_{n} \geq \frac{C_{n} - t_{n}}{24} && \forall n\in N_{s} \label{eq:s7}\\
&C^{max} = \sum_{n\in N_{s}}\sum_{m\in N_{s}\setminus n} c_{nm}\cdot \gamma_{nm} + \sum_{n\in N_{s}}c_{ln}\cdot \gamma^{+}_{ln} && l = dest_{s} \label{eq:s8}\\
&\zeta_{s}\geq C^{max} + \sum_{n\in N_{s}}w_{n}\cdot T_{n} && \label{eq:Sobj} \\
& && \notag \\
&\gamma_{nm} \in \{0, 1\} && \forall n\in N_{s}, m\in N_{s} \notag &&\\
&\gamma^{+}_{ln} \in \{0, 1\} && \forall n\in N_{s}, l = dest_{s} \notag &&\\
&\gamma^{-}_{ln} \in \{0, 1\} && \forall n\in N_{s}, l = dest_{s} \notag &&\\
&C_{n}\geq 0 && \forall n\in N_{s} \notag &&\\
&T_{n}\geq 0, T_{n}\in \mathbb{Z} && \forall n\in N_{s} \notag &&\\
&C^{max}\geq 0 && \notag &&\\
& && \notag &&
\end{flalign}
\endgroup

We notice that $\mathcal{S}^{s}$ is a reduction of Constraints (\ref{eq:11})-(\ref{eq:22}) of $\mathcal{P}$ to a single-entity setting, considering fixed values for variables $h_{ns}$ and $v_{s}$ (as provided by the solution $\hat{\textsc{x}}$). We also notice that the sum of all objective values of the subproblems is equal to the total last-mile transportation and penalty costs that are involved in the objective function of $\mathcal{P}$: 

\begin{flalign}
&\sum_{s\in \{S_{L}|v_{s} = 1\}}\zeta_{s} = \sum_{s\in \{S_{L}|v_{s} = 1\}}C^{max}_{s} + \sum_{n\in N}w_{n}\cdot T_{n} && \label{eq:tc} \\
& && \notag &&
\end{flalign}

Therefore, let $\textsc{y}^s = \{\gamma_{nm}, \gamma^{+}_{ln}, \gamma^{-}_{ln}, C_{n}, T_{n}, C^{max}\}$ be the group of variables of each $\mathcal{S}^{s}$ and $g(\textsc{y}^{s}) = \zeta_{s}$. The sum of functions $f(\textsc{x})$ and $g(\textsc{y}^{s})$ is equal to the objective function of the original problem $\mathcal{P}$. 

\subsection{A Relaxation for the master problem: Partial Benders Decomposition}
Although $\mathcal{M}$ integrates the intermodal transportation stage, which is usually the most costly part of the supply chain, we notice that the objective function completely neglects the last-mile transportation costs and the delay penalties. To avoid the roll-over of excessive delay costs from the intermodal transportation stage to the last-mile routing, we implement a variation of the novel \textit{Partial Benders Decomposition} technique, to secure a more efficient performance of our decomposition scheme.

To retain information of the objective function of the subproblem to the master problem, we consider a straightforward, but nevertheless effective relaxation of the original last-mile transportation and penalty costs.

Let $c^{min}_{n}$ be the minimum distance cost of the delivery point of order $n\in N$ (i.e., $c^{min}_{n} = min\{c_{mn}, c_{ln}|l\in L, m\in N\setminus n\}$). We notice that, for each used service $s\in \{S_{L}|\hat{v}_{s}=1\}$, the transportation cost $C^{max}$ is equal with the sum of all sequence-dependent distance costs (\ref{eq:s8}). Hence, as $c^{min}_{n} \leq c_{mn}$ for all delivery points of $n, m\in N_{s}$ or Satellite hubs $l\in L$, a lower bound of the transportation cost is $C^{max} \geq \sum_{n\in N_{s}}c^{min}_{n}$. Let $C^{\star}_{s}$ be a continuous variable, defined by the following set of inequalities, which is added to $\mathcal{M}$:

\begin{flalign}
&C^{\star}_{s} \geq \sum_{n\in N}c^{min}_{n}\cdot h_{ns} && \forall s\in S_{L} \label{eq:r1}\\
& && \notag &&
\end{flalign}

Now, we attempt to compute a valid lower bound of penalty deliveries. A tight approximation of penalty costs could be decisive for the performance of the decomposition, as the optimal solution of the intermodal transportation stage tends to be in conflict with these costs. Tardiness $T$ is computed by function $T = max\{0, C - d\}$, $C$ and $d$ being the completion and due time respectively. We notice that, for all time instances $t_{1}$ and $t_{2}$ for which $t_{1} < t_{2}$, $t_{1} - d < t_{2} - d \longrightarrow T_{1} \leq T_{2}$.

Let $n\in N$ be an order that is shipped to Satellite hub $l\in L$ via service $s\in S_{L}$. The earliest possible delivery time of $n$ is the arrival time of service $s$, added by the distance between $l$ and the delivery point of $n$. Hence, a lower bound of the delivery time of order $n$ is $(arrival_{s} + c_{ln})\cdot h_{ns}$:

\begin{flalign}
&T^{\star}_{n} \geq \frac{arrival_{s} + c_{ln} - t_{n}}{24}\cdot h_{ns} && \forall n\in N, l\in L, s\in S_{L} \label{eq:r2} \\
& && \notag &&
\end{flalign}

$T^{\star}_{n}$ being an integer variable, indicating the lower bound of the actual number of days of delay. Note that the delay is counted in number of days, requiring a division of the right-hand side with 24. Last, we integrate the relaxation variables to the objective function of $\mathcal{M}$:

\begin{flalign}
&z \geq \sum_{s\in S}(c^{travel}_{s}\cdot V_{s} + c^{fixed}_{s}\cdot v_{s}) + \sum_{n\in N}w_{n}\cdot T^{\star}_{n} + \sum_{s\in S_{L}}C^{\star}_{s} && \label{eq:r3} \\
& && \notag &&
\end{flalign}

\subsection{The LBBD Algorithm}
We are now in a position to describe the method computing the optimal solution to $\mathcal{P}$. The convergence of Algorithm \ref{alg:bda} depends on the validity of the constructed \textit{optimality cuts}. The algorithm iteratively solves $\mathcal{S}^s$ for each used service $s$ that arrives to Satellite hubs, after being fed by a solution of $\mathcal{M}$. Algorithm \ref{alg:bda} finds at each iteration a solution to $\mathcal{M}$ of non-decreasing value for the objective $z$ and terminates once the computed gap is lower than a pre-defined value or a limit of iterations is reached.

\begin{algorithm}[H]
	\fontsize{9}{11}\selectfont
	\caption{\label{alg:bda} A Logic-Based Benders Decomposition Algorithm}
\begin{algorithmic}[1]
\STATE Let $r = 0$ be an iteration number;
\STATE Solve $\mathcal{M}$, including Constraints (\ref{eq:r1}) to (\ref{eq:r3}) and let $\hat{\textsc{x}} = \{\hat{x}_{gs}, \hat{e}_{g}, \hat{E}_{g}, \hat{v}_{s}, \hat{V}_{s}, \hat{y}_{ng}, \hat{h}_{ns}\}$ be its optimal solution and  $\hat{z}$ its objective value; 
\STATE Let $\hat{z}$ be the incumbent lower bound on the optimal solution of $\mathcal{P}$;
\STATE Let $\epsilon^{0}(\hat{\textsc{x}}) = 1$ or any positive number;
\WHILE {$f(\hat{\textsc{x}}) + \epsilon^{r}(\hat{\textsc{x}}) > \hat{z}$}
\STATE Set $r = r + 1$
\STATE Solve $\mathcal{S}^{s}$ for all $s\in \{S_{L}|\hat{v}_{s}=1\}$;
\STATE Let $\epsilon^{r}(\hat{\textsc{x}})$ be the sum of the objective values of $\mathcal{S}^{s}$;
\STATE Construct a bounding function $B^{r}_{s}(\hat{h}_s)$ for each  service $s\in \{S_{L}|\hat{v}_{s}=1\}$, where $\hat{h}_s$ are the values $\hat{h}_{ns}$ for all orders $n\in N_{s}$;
\STATE Add Optimality Cuts (\ref{eq:c1}), (\ref{eq:c2}) to $\mathcal{M}$;
\STATE Solve $\mathcal{M}$;
\STATE Let the solution of $\mathcal{M},$ namely $\hat{\textsc{x}}_{r},$ be the optimal solution;
\STATE Let $\hat{z}$ be the new lower bound of $\mathcal{P}$;
\STATE Set $\hat{\textsc{x}} = \hat{\textsc{x}}_{r}$;
\STATE Let $f(\hat{\textsc{x}})$ be the total cost value in $\textsc{x}$;
\ENDWHILE
\STATE return $\hat{z}$;
\end{algorithmic}
\end{algorithm}

\textbf{Optimality Cuts.} Let $\hat{\textsc{x}}_{r-1} = \{\hat{x}^{r-1}_{gs}$, $\hat{e}^{r-1}_{g}$, $\hat{E}^{r-1}_{g}$, $\hat{v}^{r-1}_{s}$, $\hat{V}^{r-1}_{s}$, $\hat{y}^{r-1}_{ng}$, $\hat{h}^{r-1}_{ns}\}$ be the solution of $\mathcal{M}$ at iteration $r-1$, and $N^{r-1}_{s}$ be the set of orders that have been transferred to a Satellite hub by a service $s\in S_{L}$: $N^{r-1}_{s} = \{n\in N|\hat{h}^{r-1}_{ns} = 1, dest_{s}\in L, \hat{v}^{r-1}_{s} = 1\}$. As $\hat{\textsc{x}}_{r-1}$ is provided into the family of subproblems of the following iteration, let $\zeta^{r}_{s}$ be the cost contribution of each subproblem $\mathcal{S}^{s}$ at iteration $r$:

\begin{flalign}
&\zeta^{r}_{s} = C^{max}_{s} + \sum_{n\in N_{s}}w_{n}\cdot T_{n} && \forall s\in \{S_{L}|\hat{v}^{r-1}_{s} = 1\} \label{eq:cc} \\
& && \notag &&
\end{flalign}

By (\ref{eq:tc}) and (\ref{eq:cc}), we construct a function $g(\textsc{y}_{r})$, which is linked with a solution $\hat{\textsc{x}}_{r-1}$ of $\mathcal{M}$:

\begin{flalign}
& \sum_{s\in \{S_{L}|\hat{v}^{r-1}_{s} = 1\}}\zeta^{r}_{s} = g(\textsc{y}_{r}) = \epsilon^{r}(\hat{\textsc{x}}_{r-1}) && \label{eq:g(y)} \\
& && \notag &&
\end{flalign}

Now, we construct a bounding function $B^{r}_{s}(\hat{h}^{r}_{s})$ for each service $s\in \{S_{L}|\hat{v}^{r-1}_{s} = 1\}$, where $\hat{h}^{r}_{s}$ are the values $\hat{h}^{r}_{ns}$ for all orders $n\in N^{r-1}_{s}$:
\begin{equation}
	B^{r}_{s}(\hat{h}_s^r) = 
\begin{cases}
	\zeta^{r}_{s} & \text{if } \{n\in N^{r-1}_{s}|\hat{h}_{ns}^{r} = 0\} = \varnothing \\
	0 & \text{otherwise} \label{eq:Bs}
	\end{cases}
\end{equation}
Roughly described, the bounding function is either equal to $\zeta^{r}_{s}$, if all orders $n\in N^{r-1}_{s}$ are assigned to the same service, or 0, if at least one order $n\in N^{r-1}_{s}$ is assigned to a different service.

The following technical lemma is crucial in order to guarantee LBBD correctness and its proof adopts the style of \cite{Hook07}.
\begin{lemma}
Let $B^{r}(\textsc{x}_{r}) = \sum_{s\in \{S_{L}|\hat{v}^{r-1}_{s} = 1\}}B^{r}_{s}(h^{r}_{s}) + f(\textsc{x}_r)$ and $\hat{\textsc{x}}_{r-1}$ be an optimal solution of $\mathcal{M}$ in iteration $r-1$. Then, the following two properties hold:
\begin{description}
	\item{$P_1$: } If $\textsc{x}_{r}= \hat{\textsc{x}}_{r-1}$, then $B^{r}(\textsc{x}_{r}) = f(\hat{\textsc{x}}_{r-1}) + \epsilon^{r}(\hat{\textsc{x}}_{r-1})$.
	\item{$P_2$: } $f(\textsc{x}'_{r-1}) + g(\textsc{y}'_r) \geq B^{r}(\textsc{x}'_{r})$, for any feasible solutions  $\textsc{x}'_{r-1}$, $\textsc{x}'_{r}$ and $\textsc{y}'_r$ in iterations $r-1$, $r$ respectively.
\end{description}\label{le:1}
\end{lemma}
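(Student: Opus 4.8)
The plan is to verify the two properties separately, treating $P_1$ as a tightness (exactness) statement at the incumbent and $P_2$ as a global validity (lower-bounding) statement. Together these are exactly the two ingredients Hooker's framework requires for an optimality cut of the form $z \ge B^{r}(\textsc{x})$ to be simultaneously valid and non-redundant, which is why isolating them is what drives LBBD correctness.

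For $P_1$ I would argue by direct substitution. If $\textsc{x}_{r} = \hat{\textsc{x}}_{r-1}$, then in particular $\hat{h}^{r}_{ns} = \hat{h}^{r-1}_{ns}$ for every order $n$ and every service $s$ with $\hat{v}^{r-1}_{s} = 1$. Consequently every order of $N^{r-1}_{s}$ is still routed on $s$, so the set $\{n \in N^{r-1}_{s} \mid \hat{h}^{r}_{ns} = 0\}$ is empty and the first branch of (\ref{eq:Bs}) applies, giving $B^{r}_{s}(\hat{h}^{r}_{s}) = \zeta^{r}_{s}$. Summing over $s \in \{S_{L} \mid \hat{v}^{r-1}_{s} = 1\}$ and adding $f(\textsc{x}_{r}) = f(\hat{\textsc{x}}_{r-1})$ yields $B^{r}(\textsc{x}_{r}) = \sum_{s}\zeta^{r}_{s} + f(\hat{\textsc{x}}_{r-1})$; invoking identity (\ref{eq:g(y)}), which equates $\sum_{s}\zeta^{r}_{s}$ with $\epsilon^{r}(\hat{\textsc{x}}_{r-1})$, closes this case. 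This part is essentially bookkeeping.

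For $P_2$ the heart is a monotonicity property of the subproblem optimum with respect to the set of orders assigned to a service, which I would isolate as a standalone claim: if $A \subseteq B$ are two order sets carried by the same satellite-bound service, then the optimal last-mile cost on $A$ is no larger than on $B$. To prove it I take an optimal route for $B$ and shortcut it past the points of $B \setminus A$; assuming the delivery costs $c_{nm}, c_{ln}$ obey the triangle inequality (which holds for genuine distances), each shortcut does not increase the tour length $C^{max}$ of (\ref{eq:s8}) and does not increase the cumulative arrival time, hence the tardiness $T_{n}$, of any retained order. The resulting feasible $A$-route therefore costs at most the $B$-optimum, so the $A$-optimum does too. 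Given this, $P_2$ follows term by term: whenever $B^{r}_{s} = \zeta^{r}_{s}$, the current solution keeps all of $N^{r-1}_{s}$ on $s$, so $s$ carries a superset of $N^{r-1}_{s}$ and its current optimal contribution $\zeta_{s}$ is at least $\zeta^{r}_{s}$ by monotonicity; whenever $B^{r}_{s} = 0$ the bound is trivial by non-negativity of $\zeta_{s}$. Summing, and using that $g(\textsc{y})$ aggregates the per-service optima over all used services via identity (\ref{eq:tc}), gives $g(\textsc{y}) \ge \sum_{s} B^{r}_{s}$, which yields $P_2$ once the $f(\textsc{x})$ contributions on both sides are matched.

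The main obstacle I anticipate is the monotonicity claim, and specifically making the shortcutting argument rigorous for both cost components at once: one must check that the same relative visiting order simultaneously controls the makespan term and every individual tardiness term, and must state the triangle-inequality hypothesis on the distances explicitly, since without it the deletion of a stop could lengthen the tour. The only remaining subtlety is notational, namely reconciling the iteration indices $r-1$ and $r$ appearing in the statement of $P_2$; I would resolve it by noting that $B^{r}_{s}$ depends on the incumbent $\hat{\textsc{x}}_{r-1}$ only through the fixed data $N^{r-1}_{s}$ and $\zeta^{r}_{s}$, whereas its free argument is the current assignment $\hat{h}^{r}_{s}$, so the inequality is really a statement about any feasible full solution evaluated against the cut built at iteration $r$.
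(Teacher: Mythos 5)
Your handling of $P_1$ is the same direct substitution the paper uses, so nothing to add there. For $P_2$ you take a genuinely different route: the paper argues by contradiction and splits only into the two extreme cases $\textsc{x}'_{r}=\hat{\textsc{x}}_{r-1}$ (where $P_1$ gives equality) and $\textsc{x}'_{r}\neq\hat{\textsc{x}}_{r-1}$ (where it asserts the bounding functions drop to $0$ and $0\leq\zeta^{r}_{s}$), whereas you give a direct termwise lower bound resting on an explicit monotonicity claim --- the optimal last-mile cost of a service is nondecreasing in the order set it carries, proved by shortcutting. Your route is heavier but also more complete: definition (\ref{eq:Bs}) sets $B^{r}_{s}=\zeta^{r}_{s}$ whenever no order of $N^{r-1}_{s}$ has \emph{left} $s$, which is compatible with new orders having \emph{joined} $s$; in that mixed case the paper's dichotomy is silent, and your monotonicity lemma is precisely what certifies that the new per-service cost is still at least $\zeta^{r}_{s}$. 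The price is the triangle-inequality hypothesis on $c_{nm},c_{ln}$, which you rightly insist must be stated explicitly and which the paper never assumes (its generator, a Euclidean distance scaled by a per-pair random factor, need not satisfy it); the paper's shorter contradiction argument avoids invoking it only because it never confronts the superset case. Two small points to tie down if you write this out: first, the set of services used at iteration $r$ need not equal $\{S_{L}\mid\hat{v}^{r-1}_{s}=1\}$, but services abandoned at $r$ have $B^{r}_{s}=0$ and newly used services only add nonnegative terms to $g(\textsc{y}'_{r})$ via (\ref{eq:tc}), so your summation step survives; second, your shortcutting must preserve feasibility of the start from $dest_{s}$ at time $arrival_{s}$, which it does since deleting stops only advances arrival times. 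Your closing remark on reconciling the iteration indices matches the intended reading of the cut.
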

\begin{proof}
For Property $P_1$, if $\textsc{x}_{r} = \hat{\textsc{x}}_{r-1}$, then $\{n\in N^{r-1}_{s}|\hat{h}^{r}_{ns} = 0\} = \varnothing$ for all $s\in \{S_{L}|\hat{v}^{r-1}_{s} = 1\}$, since all orders are assigned to the same service in iterations $r-1$ and $r$. By (\ref{eq:Bs}), $B^{r}_{s}(h^{r}_{s}) = \zeta^{r}_{s}$ $\forall s\in \{S_{L}|\hat{v}^{r-1}_{s} = 1\} \rightarrow \sum_{s\in \{S_{L}|\hat{v}^{r-1}_{s} = 1\}}B^{r}_{s}(h^{r}_{s}) = \sum_{s\in \{S_{L}|\hat{v}^{r-1}_{s} = 1\}}\zeta^{r}_{s}$. By (\ref{eq:g(y)}), $\sum_{s\in \{S_{L}|\hat{v}^{r-1}_{s} = 1\}}B^{r}_{s}(h^{r}_{s}) = \epsilon^{r}(\hat{\textsc{x}}_{r-1})$. By adding with $f(\textsc{x}_r)$, $\sum_{s\in \{S_{L}|\hat{v}^{r-1}_{s} = 1\}}B^{r}_{s}(h^{r}_{s}) + f(\textsc{x}_r) = \epsilon^{r}(\hat{\textsc{x}}_{r-1}) + f(\textsc{x}_r)$. The left-hand side is equal to $B^{r}(\textsc{x}_r)$ and the right-hand side is equal to $\epsilon^{r}(\hat{\textsc{x}}_{r-1}) + f(\hat{\textsc{x}}_{r-1})$ for $\textsc{x}_{r} = \hat{\textsc{x}}_{r-1}$.\\

For Property $P_2$, assume to the contrary that there is a feasible solution $\textsc{x}'_{r-1}$, $\textsc{y}'_{r}$ for $\mathcal{M}$ and $\mathcal{S}$ respectively, in iteration $r$, such that:
\begin{equation}
    B^{r}({\textsc{x}'_r}) > f(\textsc{x}'_{r-1}) + g(\textsc{y}'_{r}) \label{eq:contradiction}
\end{equation}
If $\textsc{x}'_{r} = \hat{\textsc{x}}_{r-1}$, by Property $P_1$, $B^{r}(\textsc{x}'_{r}) = f(\hat{\textsc{x}}_{r-1}) + \epsilon^{r}(\hat{\textsc{x}}_{r-1})$. Since the objective value of $\mathcal{S}$ is $\epsilon^{r}(\hat{\textsc{x}}_{r-1}) = g(\textsc{y}'_{r})$, we yield a contradiction of (\ref{eq:contradiction}).\\
If $\textsc{x}'_{r} \neq \hat{\textsc{x}}_{r-1}$, then ${h'}_s^r \neq \hat{h}^{r-1}_{s}$ too. Thus, there is at least one order $n\in N^{r-1}_{s}$ that has been shipped by a different service in iteration $r$, and thus $\{n\in N^{r-1}_{s}|{h'}_{ns}^{r} = 0\} \neq \varnothing$. Hence, by definition, the bounding function of $s$ will be equal to 0, $B^{r}_{s}(h'^{r}_{s}) = 0$, which is definitely less or equal than the cost of service $s$ in solution $\textsc{y}'_{r}$, equal to $\zeta^{r}_{s}$. To conclude, $B^{r}_{s}(h'^{r}_{s})$ is a lower bound of $\zeta^{r}_{s}$ for all $s\in \{S_{L}|\hat{v}^{r-1}_{s} = 1\}$. Thus, $\sum_{s\in \{S_{L}|\hat{v}^{r-1}_{s} = 1\}}B^{r}_{s}(h'^{r}_{s})$ $\leq \sum_{s\in \{S_{L}|\hat{v}^{r-1}_{s} = 1\}}\zeta^{r}_{s}$, which is equivalent with $f(\textsc{x}'_{r-1}) + \sum_{s\in \{S_{L}|\hat{v}^{r-1}_{s} = 1\}}B^{r}_{s}(h'^{r}_{s}) \leq f(\textsc{x}'_{r-1}) + g(\textsc{y}'_{r})$ and thus, $B^{r}({\textsc{x}'_r}) \leq f(\textsc{x}'_{r-1}) + g(\textsc{y}'_{r})$, contradicting (\ref{eq:contradiction}).
\end{proof}

Since the bounding function $B^{r}(\textsc{x}_{r})$, as defined by Lemma \ref{le:1}, satisfies Properties $P_1$ and $P_2$ in each iteration $r$ of Algorithm \ref{alg:bda}, and the domain of variables $\textsc{y}$ is finite, the following is proved exactly as \cite[Theorem 1]{Hook07}.
\begin{theorem}
Algorithm \ref{alg:bda} converges to the optimal solution of $\mathcal{P}$ after finitely many steps. \label{theo:1}
\end{theorem}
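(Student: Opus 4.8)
The plan is to reproduce the standard Logic-Based Benders convergence argument of \cite{Hook07}, letting Lemma \ref{le:1} carry the technical load. Concretely, I would track two quantities across the iterations of Algorithm \ref{alg:bda}: the optimal value $\hat{z}$ of the master $\mathcal{M}$, which I will argue is always a valid \emph{lower} bound on the optimum of $\mathcal{P}$, and the value $f(\hat{\textsc{x}}) + \epsilon^{r}(\hat{\textsc{x}})$ obtained by evaluating a master solution through its subproblems, which I will argue is always a valid \emph{upper} bound. Property $P_2$ of Lemma \ref{le:1} underwrites the lower bound, and Property $P_1$ together with the finiteness of the assignment space underwrites termination.

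First I would establish the lower bound. Every optimality cut placed into $\mathcal{M}$ in Step~10 is of the form $z \geq B^{r}(\textsc{x})$; by $P_2$, $B^{r}(\textsc{x}) \leq f(\textsc{x}) + g(\textsc{y})$ for any feasible $(\textsc{x},\textsc{y})$ of $\mathcal{P}$, so in particular the projection of the true optimum satisfies every cut and $\mathcal{M}$ stays a relaxation of $\mathcal{P}$; hence $\hat{z} \leq \mathrm{OPT}(\mathcal{P})$, and since cuts only accumulate, $\hat{z}$ is non-decreasing. For the upper bound, I would observe that solving each $\mathcal{S}^{s}$ to optimality and concatenating the resulting last-mile routes with the fixed master solution $\hat{\textsc{x}}$ produces a single point feasible for all of (\ref{eq:1})--(\ref{eq:22}); by (\ref{eq:tc}) its objective equals $f(\hat{\textsc{x}}) + \epsilon^{r}(\hat{\textsc{x}})$, whence $\mathrm{OPT}(\mathcal{P}) \leq f(\hat{\textsc{x}}) + \epsilon^{r}(\hat{\textsc{x}})$. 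When the \textbf{while} loop exits, its guard gives $f(\hat{\textsc{x}}) + \epsilon^{r}(\hat{\textsc{x}}) \leq \hat{z}$, and chaining this with $\hat{z} \leq \mathrm{OPT}(\mathcal{P}) \leq f(\hat{\textsc{x}}) + \epsilon^{r}(\hat{\textsc{x}})$ collapses all three to equality, so the returned $\hat{z}$ is optimal.

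It remains to prove finite termination, and here I would argue by contradiction. The cut (\ref{eq:Bs}) depends on a master solution only through the binary vector $\hat{h}$, via the test $\{n \in N^{r-1}_{s} \mid \hat{h}^{r}_{ns} = 0\} = \varnothing$; since $N$ and $S_{L}$ are finite, only finitely many distinct such vectors, and hence finitely many distinct cuts, can ever arise. If the loop ran forever it would generate infinitely many master solutions, so by pigeonhole some subproblem-relevant solution would recur. At its recurrence, the cut built from its first appearance is already present in $\mathcal{M}$, and $P_1$ forces that cut to evaluate to exactly $f(\hat{\textsc{x}}) + \epsilon^{r}(\hat{\textsc{x}})$, i.e.\ to the current upper bound; the constraint $z \geq B^{r}(\textsc{x})$ then drives $\hat{z}$ up to that value, violating the loop guard, a contradiction. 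Therefore the algorithm halts after finitely many iterations, and by the previous paragraph it halts at an optimum.

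The hard part will be the termination argument rather than the bounding: one must check that $P_1$ fires precisely when the master reproduces an already-explored assignment, and that the cumulative cut then raises $\hat{z}$ all the way to the matching upper bound so that no positive gap can persist. The lower- and upper-bound claims are essentially bookkeeping on $P_2$ and identity (\ref{eq:tc}), whereas the no-cycling step is where the finiteness of the binary assignment space and the two-case structure of (\ref{eq:Bs}) genuinely do the work.
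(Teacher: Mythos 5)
Your proposal is correct and follows the same route as the paper: the paper's proof of Theorem \ref{theo:1} consists of noting that Lemma \ref{le:1} supplies Properties $P_1$ and $P_2$ and that the domain of $\textsc{y}$ is finite, and then deferring verbatim to Theorem 1 of \cite{Hook07}, whose argument (master as relaxation giving a monotone lower bound via $P_2$, subproblem evaluation giving an upper bound via (\ref{eq:tc}), and finite termination because a recurring $\hat{h}$-vector triggers $P_1$ and closes the gap) is exactly what you have written out. You have simply made explicit the standard argument the paper cites, so there is nothing to add.
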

Now, based on the validity of the bounding function, the following inequalities can validly integrate the bounding function into the formulation of $\mathcal{M}$:

\begingroup
\selectfont \setlength{\abovedisplayskip}{0pt}\setlength{\belowdisplayskip}{0pt}
\begin{flalign}
&z^{r}_{s} \geq \zeta^{r}_{s} - \zeta^{r}_{s}\cdot(|N^{r-1}_{s}| - \sum_{n\in N^{r-1}_{s}}h_{ns}) && \forall s\in \{S_{L}|\hat{v}^{r-1}_{s} = 1\} && \label{eq:c1} \\
&z \geq \sum_{s\in \{S_{L}|\hat{v}^{r-1}_{s} = 1\}}z^{r}_{s} + f(\textsc{x}_{r}) && && \label{eq:c2}\\
& && \notag &
\end{flalign}
\endgroup

where $z^{r}_{s}$ are non-negative real variables for each service $s\in \{S_{L}|\hat{v}^{r-1}_{s} = 1\}$. Constraints (\ref{eq:c1}) ensure that if any order $n\in N^{r-1}_{s}$ is assigned to a different service in iteration $r$, then $z^{r}_{s}$ will be equal to 0. On the contrary, if all orders $n\in N^{r-1}_{s}$ are assigned to the same service, then $z^{r}_{s}$ is an upper bound of $\zeta^{r}_{s}$. Constraints (\ref{eq:c2}) ensure that the objective value of $\mathcal{M}$ is an upper bound of function $B^{r}(\textsc{x}_r)$. Therefore, adding (\ref{eq:c1}) and (\ref{eq:c2}) after each iteration of Algorithm \ref{alg:bda} guarantees convergence to the optimal solution after a finite number of steps (Theorem \ref{theo:1}).

\section{Benchmarking on Random Datasets}
We perform a set of experiments to evaluate the performance of Algorithm \ref{alg:bda} on randomly generated instances. All experiments are performed on a Linux server (4 processors, 3.3 GHz CPU, 12 GB RAM) using CPLEX 20.1 (Python API). A time limit of 900 seconds is imposed on the master problem, and the execution of Algorithm \ref{alg:bda} is restricted to 20 iterations. We also impose a primal termination, if the computed \textsc{Gap} is lower than $1\%$. As a result, Algorithm \ref{alg:bda} returns a worst-case estimation of the optimality gap. Table \ref{tab:annotations} summarizes the notation that will be used for the presentation of our results.

\begin{table}[h!]
\scriptsize \centering
\begin{tabular}{|l|l|}
\hline
\textsc{Instance}               & The number of instance\\ \hline
\textsc{Time}                   & The elapsed time of the solution (in seconds)\\ \hline
\textsc{Lower Bound}            & The lower bound of the solution\\ \hline
\textsc{Upper Bound}            & The objective value of the solution\\ \hline
\rule{0pt}{8pt}\textsc{Gap}                    & $100\cdot \frac{\textsc{Upper Bound} - \textsc{Lower Bound}}{\textsc{Upper Bound}}$ ($\%$)\\ \hline
\textsc{Milp}                   & The solution of the original problem $\mathcal{P}$\\ \hline
\textsc{Lbbd}                   & The solution of Algorithm \ref{alg:bda}\\ \hline
\end{tabular}
\caption{Results Metrics}
\label{tab:annotations}
\end{table}

We consider a set of $\{2, 3\}$ DC hubs $J$, one intermediate hub $K$, and a set of $\{2, 3\}$ Satellite hubs $L$. For each node $i\in I = J\cup K\cup L$, we define a parameter $a_{im}$, which is related with the accessibility of node $i$ by mode $m$, $m\in \{roadway, railway, seaway\}$. We assume that all nodes are accessible by roadway modes (i.e., $a_{im} = 1$ $\forall i\in I, m = roadway$). The accessibility for the rest of the available modes is decided by the generator for all nodes, except for the intermediate one, which is accessible by all modes anyway.

Based on the accessibility of the nodes of the generated network, we construct a set of scheduled services $S$. We consider a weekly timetable, in which $roadway$ services are performed on Monday, Wednesday, Friday and Sunday, $railway$ services are scheduled on Tuesday, Thursday, Saturday, and $seaway$ services are scheduled on Monday, Wednesday and Friday. Departure times $departure_{s}$ are predefined, but all capacities $Q_{s}$, arrival times $arrival_{s}$, and costs $c^{travel}_{s}$ and $c^{fixed}_{s}$ are determined by the generator. For each pair of nodes $(i, j)$, $i\in J\cup K$, $j\in I$, and for all days of the week on which a scheduled service is involved, we generate a service of $origin_{s} = i$, $destination_{s} = j$, if both nodes are accessible by the same mode. Last, the subsets of forbidden co-assignments $F_{s}$ are constructed, according to the rules that were described in Section \ref{Section:LBBD}.

Now, we construct a set of $\{60, 80, 100, 120, 140, 160\}$ orders $N$. All parameters that are related to $N$ are generated by random uniforms, and each order is assigned to exactly one DC hub $j\in J$. For each assigned order, a new compartment $g\in G$ is added to the same DC hub, to ensure that the number of compartments is infinite in practice. The capacity of compartments is pre-defined.

For the last-mile stage, we define a graph of $X\in [0, 20]$ and $Y\in [0, 20]$ coordinates. For all orders $n\in N$ and Satellite hubs $l\in L$, we randomly compute a pair of coordinates. To compute the distance costs $c_{nm}$ and $c_{ln}$, we used the Euclidean distances between all involved nodes, multiplied by a randomly generated cost per kilometer value. A listing of the generators that we used for the construction of the instances is presented in Table \ref{tab:generator}. \texttt{Uniform}($\alpha, \beta$) and \texttt{Integer}($\alpha, \beta$) are random uniforms that sample continuous and integer values in range $[\alpha, \beta]$ respectively. To combine all distinct values of sets $J$, $L$ and $N$, a total of $|J|\cdot |L|\cdot |N| = 24$ instances is generated.

\begin{table}[H]
\centering
\caption{List of random generators}
\resizebox{\textwidth}{!}{
\begin{tabular}{ll}
\hline
\textbf{Sets}       &\textbf{Values}\\ \hline
$J$                 & \{2, 3\}\\
$K$                 & \{1\}\\
$L$                 & \{2, 3\}\\
$N$                 & \{60, 80, 100, 120, 140, 160\}\\
$G$                 & Equal to $N$\\
$N_{j}$             & Randomly assignment of each $n\in N$ to one $j\in J$\\ \hline
\textbf{Parameters} &\\ \hline
$a_{im}$ &\texttt{Integer}(0, 1) $\forall i\in J\cup K\cup L, m\in \{$roadway, railway, seaway$\}$\\
$q$                 & 5000\\
$d_{n}$             &\texttt{Integer}(1000, 5000)\\
$t_{n}$             & 24$\cdot$ \texttt{Integer}(7, 10)\\
$w_{n}$             & \texttt{Integer}(1, 10)\\
$Q_{s}$             & \texttt{Integer}(2, 4) for roadway, \texttt{Integer}(3, 6) for railway, \texttt{Integer}(5, 10) for seaway\\
$c^{travel}_{s}$    & $(arrival_{s} - departure_{s})\cdot \alpha$, $\alpha = 1.0$ for roadway, $0.8$ for railway, $0.6$ for seaway\\
$c^{fixed}_{s}$     & \texttt{Integer}(200, 300) for roadway, \texttt{Integer}(300, 500) for railway, \texttt{Integer}(500, 700) for seaway\\
$departure_{s}$     & $24\cdot d + 9, d\in \{0, 2, 4, 6\}$ for roadway, $24\cdot d + 13, d\in \{1, 3, 5\}$ for railway, $24\cdot d + 10, d\in \{0, 2, 4\}$ for seaway\\
$arrival_{s}$       & $departure_{s} + $ $24\cdot $\texttt{Integer}(0, 3) $+ 21$ for $J$, $24\cdot $\texttt{Integer}(1, 3) $+ 29$ for $K$, $24\cdot $\texttt{Integer}(2, 5) $+ 33$ for $L$ destinations\\
$X_{n}, Y_{n}$      & \texttt{Uniform}(0, 20) for all $n\in N\cup L$\\
$c_{nm}$            & Euclidean distance $\cdot$ \texttt{Uniform}(0.1, 1.0)\\ \hline
\end{tabular}
}
\label{tab:generator}
\end{table}

The results of the MILP of $\mathcal{P}$ and Algorithm \ref{alg:bda} (denoted by \textsc{Milp} and \textsc{Lbbd} respectively) are summarized in Table \ref{tab:results}. The number of intermodal services $|S|$ is unstable, as we introduced the stochastic parameter $a_{im}$. More specifically, a random uniform determines whether hub $i$ is accessible by services of mode $m\in \{$roadway, railway, seaway$\}$. If $a_{im} = a_{jm} = 1$ for a pair of hubs $i, j$, then we generate a new batch of services $s|origin_{s} = i, dest_{s} = j$. To ensure the feasibility of the generated instances, we assume that all hubs are accessible by roadway services (i.e., $a_{im} = 1$, $\forall i\in J\cup K\cup L$, $m =$ roadway). In addition, the intermediate hub $K$ is accessible by all modes, so that all orders can possibly be shipped to any Satellite hub $l\in L$. The generated scheduled timetables for all 24 instances are available at \url{https://github.com/i-avgerinos/LBBD_Intermodal}.

Regarding the $\textsc{Milp}$, we impose a timelimit of 30000 seconds. If $\textsc{Time} < 30,000$ and $\textsc{Gap} > 1\%$, then the solver returned an out-of-memory error before the timelimit was reached. If the solver did not provide any feasible solution before reaching the timelimit or an out-of-memory status, then the values of \textsc{Upper Bound} and \textsc{Gap} are void.

\begin{table}[H]
\centering
\resizebox{\textwidth}{!}{
\begin{tabular}{|c|c|c|c|c|cccc|cccc|}
\hline
\multirow{2}{*}{\textsc{Instance}} & \multirow{2}{*}{$|J|$} & \multirow{2}{*}{$|L|$} & \multirow{2}{*}{$|N|$} & \multirow{2}{*}{$|S|$} & \multicolumn{4}{c|}{\textsc{Milp}}                                                                                                        & \multicolumn{4}{c|}{\textsc{Lbbd}}                                                                                                        \\ \cline{6-13} 
                                   &                        &                        &                        &                        & \multicolumn{1}{c|}{\textsc{Time}} & \multicolumn{1}{c|}{\textsc{Lower Bound}} & \multicolumn{1}{c|}{\textsc{Upper Bound}} & \textsc{Gap} & \multicolumn{1}{c|}{\textsc{Time}} & \multicolumn{1}{c|}{\textsc{Lower Bound}} & \multicolumn{1}{c|}{\textsc{Upper Bound}} & \textsc{Gap} \\ \hline
1                                  & 2                      & 2                      & 60                     & 78                     & \multicolumn{1}{c|}{30,000}         & \multicolumn{1}{c|}{6,451}                 & \multicolumn{1}{c|}{10,032}                & 29.02        & \multicolumn{1}{c|}{5,056}          & \multicolumn{1}{c|}{7,121}               & \multicolumn{1}{c|}{7,179}              & 0.80         \\ \hline
2                                  & 2                      & 3                      & 60                     & 93                     & \multicolumn{1}{c|}{30,000}         & \multicolumn{1}{c|}{5,280}                 & \multicolumn{1}{c|}{5,619}                 & 5.10         & \multicolumn{1}{c|}{4,432}          & \multicolumn{1}{c|}{5,332}              & \multicolumn{1}{c|}{5,373}              & 0.75         \\ \hline
3                                  & 3                      & 2                      & 60                     & 116                    & \multicolumn{1}{c|}{30,000}         & \multicolumn{1}{c|}{6,997}                 & \multicolumn{1}{c|}{7,434}                 & 3.39         & \multicolumn{1}{c|}{3,605}          & \multicolumn{1}{c|}{7,182}               & \multicolumn{1}{c|}{7,252}              & 0.97         \\ \hline
4                                  & 3                      & 3                      & 60                     & 144                    & \multicolumn{1}{c|}{30,000}         & \multicolumn{1}{c|}{5,939}                 & \multicolumn{1}{c|}{6,062}                 & 1.02         & \multicolumn{1}{c|}{664}           & \multicolumn{1}{c|}{6,000}              & \multicolumn{1}{c|}{6,051}               & 0.83         \\ \hline
5                                  & 2                      & 2                      & 80                     & 72                     & \multicolumn{1}{c|}{30,000}         & \multicolumn{1}{c|}{8,632}                & \multicolumn{1}{c|}{-}                    & -            & \multicolumn{1}{c|}{11,315}         & \multicolumn{1}{c|}{9,616}              & \multicolumn{1}{c|}{9,690}              & 0.77         \\ \hline
6                                  & 2                      & 3                      & 80                     & 78                     & \multicolumn{1}{c|}{5,514}          & \multicolumn{1}{c|}{8,092}                 & \multicolumn{1}{c|}{-}                    & -            & \multicolumn{1}{c|}{18,104}         & \multicolumn{1}{c|}{8,791}              & \multicolumn{1}{c|}{9,151}              & 3.93         \\ \hline
7                                  & 3                      & 2                      & 80                     & 104                    & \multicolumn{1}{c|}{30,000}         & \multicolumn{1}{c|}{9,175}                 & \multicolumn{1}{c|}{11,328}                & 10.00        & \multicolumn{1}{c|}{20,147}         & \multicolumn{1}{c|}{10,195}             & \multicolumn{1}{c|}{11,057}             & 7.79         \\ \hline
8                                  & 3                      & 3                      & 80                     & 161                    & \multicolumn{1}{c|}{3,255}          & \multicolumn{1}{c|}{6,925}                 & \multicolumn{1}{c|}{-}                    & -            & \multicolumn{1}{c|}{22,261}         & \multicolumn{1}{c|}{8,073}              & \multicolumn{1}{c|}{8,434}                 & 4.28         \\ \hline
9                                  & 2                      & 2                      & 100                    & 87                     & \multicolumn{1}{c|}{30,000}         & \multicolumn{1}{c|}{8,931}                & \multicolumn{1}{c|}{-}                    & -            & \multicolumn{1}{c|}{25,104}         & \multicolumn{1}{c|}{9,718}              & \multicolumn{1}{c|}{9,999}              & 2.82         \\ \hline
10                                 & 2                      & 3                      & 100                    & 75                     & \multicolumn{1}{c|}{30,000}         & \multicolumn{1}{c|}{9,446}                 & \multicolumn{1}{c|}{-}                    & -            & \multicolumn{1}{c|}{18,022}         & \multicolumn{1}{c|}{18,045}             & \multicolumn{1}{c|}{19,429}             & 7.13         \\ \hline
11                                 & 3                      & 2                      & 100                    & 119                    & \multicolumn{1}{c|}{-}             & \multicolumn{1}{c|}{-}                    & \multicolumn{1}{c|}{-}                    & -            & \multicolumn{1}{c|}{18,032}         & \multicolumn{1}{c|}{13,494}             & \multicolumn{1}{c|}{14,174}             & 4.80         \\ \hline
12                                 & 3                      & 3                      & 100                    & 135                    & \multicolumn{1}{c|}{3,002}          & \multicolumn{1}{c|}{9,428}                 & \multicolumn{1}{c|}{-}                    & -            & \multicolumn{1}{c|}{20,448}         & \multicolumn{1}{c|}{11,106}             & \multicolumn{1}{c|}{11,433}             & 2.86         \\ \hline
13                                 & 2                      & 2                      & 120                    & 93                     & \multicolumn{1}{c|}{-}             & \multicolumn{1}{c|}{-}                    & \multicolumn{1}{c|}{-}                    & -            & \multicolumn{1}{c|}{25,237}         & \multicolumn{1}{c|}{13,796}             & \multicolumn{1}{c|}{15,329}             & 10.00        \\ \hline
14                                 & 2                      & 3                      & 120                    & 90                     & \multicolumn{1}{c|}{-}             & \multicolumn{1}{c|}{-}                    & \multicolumn{1}{c|}{-}                    & -            & \multicolumn{1}{c|}{18,042}         & \multicolumn{1}{c|}{14,817}             & \multicolumn{1}{c|}{15,671}             & 5.45         \\ \hline
15                                 & 3                      & 2                      & 120                    & 119                    & \multicolumn{1}{c|}{-}             & \multicolumn{1}{c|}{-}                    & \multicolumn{1}{c|}{-}                    & -            & \multicolumn{1}{c|}{20548}         & \multicolumn{1}{c|}{13,212}             & \multicolumn{1}{c|}{14,803}             & 10.75        \\ \hline
16                                 & 3                      & 3                      & 120                    & 155                    & \multicolumn{1}{c|}{-}             & \multicolumn{1}{c|}{-}                    & \multicolumn{1}{c|}{-}                    & -            & \multicolumn{1}{c|}{20,542}         & \multicolumn{1}{c|}{11,539}              & \multicolumn{1}{c|}{11,914}             & 3.15         \\ \hline
17                                 & 2                      & 2                      & 140                    & 90                     & \multicolumn{1}{c|}{-}             & \multicolumn{1}{c|}{-}                    & \multicolumn{1}{c|}{-}                    & -            & \multicolumn{1}{c|}{28,040}         & \multicolumn{1}{c|}{12,690}             & \multicolumn{1}{c|}{13,285}             & 4.48         \\ \hline
18                                 & 2                      & 3                      & 140                    & 123                    & \multicolumn{1}{c|}{-}             & \multicolumn{1}{c|}{-}                    & \multicolumn{1}{c|}{-}                    & -            & \multicolumn{1}{c|}{26,032}         & \multicolumn{1}{c|}{13,640}             & \multicolumn{1}{c|}{14,229}             & 4.14         \\ \hline
19                                 & 3                      & 2                      & 140                    & 104                    & \multicolumn{1}{c|}{-}             & \multicolumn{1}{c|}{-}                    & \multicolumn{1}{c|}{-}                    & -            & \multicolumn{1}{c|}{18,904}         & \multicolumn{1}{c|}{20,756}             & \multicolumn{1}{c|}{23,049}             & 9.95         \\ \hline
20                                 & 3                      & 3                      & 140                    & 183                    & \multicolumn{1}{c|}{-}             & \multicolumn{1}{c|}{-}                    & \multicolumn{1}{c|}{-}                    & -            & \multicolumn{1}{c|}{29,963}         & \multicolumn{1}{c|}{12,395}             & \multicolumn{1}{c|}{12,916}             & 4.03         \\ \hline
21                                 & 2                      & 2                      & 160                    & 78                     & \multicolumn{1}{c|}{-}             & \multicolumn{1}{c|}{-}                    & \multicolumn{1}{c|}{-}                    & -            & \multicolumn{1}{c|}{21,473}         & \multicolumn{1}{c|}{33,150}             & \multicolumn{1}{c|}{36,415}             & 8.97         \\ \hline
22                                 & 2                      & 3                      & 160                    & 108                    & \multicolumn{1}{c|}{-}             & \multicolumn{1}{c|}{-}                    & \multicolumn{1}{c|}{-}                    & -            & \multicolumn{1}{c|}{20,542}         & \multicolumn{1}{c|}{28,467}             & \multicolumn{1}{c|}{34,437}             & 17.34        \\ \hline
23                                 & 3                      & 2                      & 160                    & 107                    & \multicolumn{1}{c|}{-}             & \multicolumn{1}{c|}{-}                    & \multicolumn{1}{c|}{-}                    & -            & \multicolumn{1}{c|}{20,543}         & \multicolumn{1}{c|}{21,767}             & \multicolumn{1}{c|}{24,263}             & 10.29        \\ \hline
24                                 & 3                      & 3                      & 160                    & 126                    & \multicolumn{1}{c|}{-}             & \multicolumn{1}{c|}{-}                    & \multicolumn{1}{c|}{-}                    & -            & \multicolumn{1}{c|}{20,451}         & \multicolumn{1}{c|}{21,827}             & \multicolumn{1}{c|}{23,639}             & 7.66         \\ \hline
\end{tabular}
}
\caption{Comparison of \textsc{Lbbd} with \textsc{Milp}}
\label{tab:results}
\end{table}

For the \textsc{Lbbd}, we notice that, for the smaller instances (up to 100 orders), the computed \textsc{Gaps} are steadily much lower than 5\% (with an exception for \textsc{Instance} 7 and 10). The five smallest instances manage to reach the primal termination limit of $1\%$ \textsc{Gap}, while, for the rest of them, the maximum allowable number of iterations is consumed. We notice that, despite the increasing of the number of orders, the respective number of services does not analogously rise. This is a conscious choice, as we are more interested in evaluating the performance of our algorithm, if the available services are not adequate to ship all orders in time, rather than estimating the limitations of our formulations in terms of numbers of variables and constraints. Even though the performance of \textsc{Lbbd} declines as the number of orders increases, only a small number of instances provides \textsc{Gap} of more than $10\%$.

Nevertheless, \textsc{Lbbd} outperforms the holistic \textsc{Milp} under any circumstance. \textsc{Milp} provides feasible solutions for only 5 out of 24 instances, although showing a good performance for three of them. For all instances, though, \textsc{Lbbd} provides good feasible solutions (improving the respective solutions of \textsc{Milp}, if computed) in significantly less time than \textsc{Milp}. We also notice that the computed \textsc{Lower Bounds} of \textsc{Lbbd} are tighter.

\section{A Case-Study Extension} \label{section:Case-study}
Motivated by a real logistics supply chain setting of a major European 3PL provider, we extend the methodological framework that we introduced above to show its versatility on actual problems. We exploit the fact that the practical restrictions of the provider allow us to reduce the domains of feasible solutions and integrate additional processes of the supply chain in our unified approach. Then, we implement our extended optimizing method on several real weekly instances of considerable scale.

\subsection{Case-Study Description}
The business case that has motivated this paper has been provided by EKOL (\url{https://www.ekol.com/}, a major 3PL logistics company operating mainly in Europe, as part of a funded research project. We describe the setting in abstract terms and periodically explain the Ekol-specific values of various parameters, always remaining consistent with the terminology and notation that Section \ref{Section:Model} describes.

Let us introduce a first-mile stage, which includes the collection of orders $N$ from pickup points that are located to Poland, Czechia, Hungary and Slovakia. Each order $n\in N$ is linked with a pickup point and a customs clearance office, apart from the parameters that we presented in Table \ref{tab:notation}. All collected orders are unloaded to a set of three DC hubs $J = \{$Katowice, Ostrava, Budapest$\}$. Before the orders are transferred to their dedicated hub, though, a customs clearance process must be conducted in the respective customs clearance office. The time-span of all loading, customs clearance and unloading processes is restricted by the operating hours of all companies involved. Each DC hub is served by a finite number of single-compartment identical trucks.

A few orders suffice to fill a compartment, due to tight capacities of the latter, all companies have quite strict operating hours and each pickup node is dedicated to a single customs company and a single DC hub. All these aspects reduce the number of first-mile routes, so as we can enumerate them, even for instances of significant size. Hence, the associated VRPTW can be formulated as an exact-covering problem, where columns are in one-to-one correspondence with routes and must `cover' the rows which are in one-to-one correspondence with pickup points. Let $S^{-}$ be a subset of the already defined set of scheduled services $S$, including all predefined feasible routes, and $\alpha_{ns}$ be a binary parameter, indicating whether the pickup point of order $n\in N$ is served by route $s\in S^{-}$. The subset of intermodal scheduled services is renamed to $S^{+}$.

The intermodal transportation stage of the problem is as described in Section \ref{Section:Model}. We consider an intermediate hub in the port of Trieste (i.e., $K = \{$Trieste$\}$) and two Satellite hubs in Turkey: $L = \{$Istanbul, Mersin$\}$. A fixed timetable of weekly scheduled services connects the nodes of the intermodal network. All nodes are connected with roadway services, conducted by multi-compartment trucks. The DC hub of Ostrava is linked with the port of Trieste by rail and the hub in Budapest is connected with the rail station of Istanbul. Last, the intermediate hub is the origin of seaway services to the ports of Istanbul and Mersin.

For the last-mile stage, we consider two alternative delivery processes: the \textit{hub-and-spoke} and the \textit{connected-hubs} layouts. As the first one does not differ from the process that we described previously, let us focus on the latter case. In a \textit{connected-hubs} network, the compartments that arrive to Satellite hubs $L$ are unloaded, and their components (i.e., the shipped orders), are consolidated to new vehicles, which complete their delivery. As this stage is also subject to similar capacities, time windows and resource constraints as the first mile, we pre-compute routes as in the first-mile stage and apply the set-covering formulation for the associated VRPTW. In addition, given the tight capacities of the new vehicles, the short time slots of the operating hours of the delivery points and the fact that occupying vehicles for long routes will increase the delayed deliveries, we can safely assume that each vehicle delivers at most three orders per route. To distinguish the orders that will be delivered by a \textit{hub-and-spoke} or a \textit{connected-hubs} process, each service that arrives to Satellite hubs is duplicated. Combining all alternatives with the different departure times and network layouts implies a large number of scheduled services. Overall, the intermodal network of EKOL has 143 roadway services, 22 railway services and 30 seaway services per week, which connect three DC hubs, two Satellite hubs (and their \textit{connected-hubs} copies) and one intermediate hub.

The objective function is added by the transportation costs of the newly presented first-mile stage. The delay penalties are computed by the product of the squared number of days of delay with the weight of importance of each order, enhancing the contribution of tardiness on the overall cost. An illustrative example of the supply chain of the presented case is shown in Figure \ref{fig:real}.

\begin{figure}[h]
\begin{center}\includegraphics[scale=0.5]{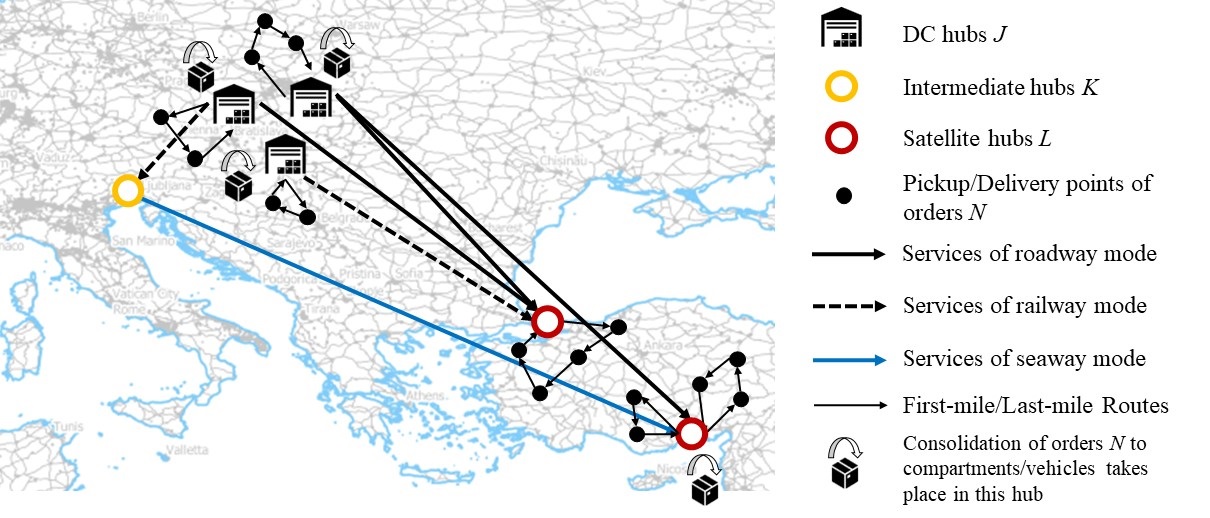}
\caption{An indicative intermodal network for the Case-Study}\label{fig:real}
\end{center}
\end{figure}

\subsection{Extending the LBBD for a three-stages Supply Chain}
As several new processes are added on the supply chain, the formulations of the master problem and the subproblems must be reformed. Constraints (\ref{eq:1})-(\ref{eq:10}), though, remain intact, as the intermodal stage of the case-study is as described for the generic version of the problem. Also, the relaxation of the last-mile transportation costs (\ref{eq:r1}) is not affected, as it is still valid for the \textit{connected-hubs} option. Note, though, that the annotation of $S$ is replaced with $S^{+}$ for all involved constraints, as the set of services $S$ now includes the first-mile routes $S^{-}$ too.\\

\textbf{Master problem.} To integrate the first-mile stage in the master problem, we add the following inequalities:

\begingroup
\selectfont \setlength{\abovedisplayskip}{0pt}\setlength{\belowdisplayskip}{0pt}
\begin{flalign}
&\sum_{s\in S^{-}}\alpha_{ns}\cdot f_{s} = 1 && \forall n\in N && \label{eq:uc1}\\
&f_{s}\in \{0, 1\} && \forall s\in S^{-} \notag \\
& && \notag &
\end{flalign}
\endgroup

Variables $f_{s}$ indicate whether route $s\in S^{-}$ is conducted. Constraints (\ref{eq:uc1}) are regular exact-covering inequalities, extensively used for routing problems \cite{San11, San13}. We notice that no resource constraints are added to $\mathcal{M}$, despite the fact that the fleet of vehicles is limited. To avoid increasing the complexity of the already overloaded master problem, we attempt to address the resources restrictions to the subproblems, assuming an infinite number of vehicles for $\mathcal{M}$.\\

\textbf{Subproblems for the \textit{Connected-Hubs} layout.} The subproblems $\mathcal{S}^{s}$ of Section \ref{Section:LBBD} are still valid, if the service $s$ that shipped orders $N_{s}$ is compatible with the \textit{hub-and-spoke} layout. For the \textit{connected-hubs} case, though, the splitability of the subproblems per used service is invalid, as orders of different shipments can be consolidated into the same delivery route. As we did for the first-mile routes, let $R$ be a set of pre-defined last-mile routes and $\beta_{nr}$ be a binary parameter that determines whether the delivery point of order $n$ is served by $r\in R$.

Following a preliminary computational experimentation among possible formulations\cite{Pey20}, a CP formulation is suggested as the most efficient. This is because CP can handle resource-allocation constraints more easily through so-called `global functions', thus also avoiding the use of time-indexed variables. In addition, a CP formulation can handle the first-mile resources restrictions that were rolled over by $\mathcal{M}$ to $\mathcal{S}$, barely adding any complexity.

We define the subproblem $\mathcal{S}^{c}$:

\begingroup
\scriptsize \selectfont \setlength{\abovedisplayskip}{0pt}\setlength{\belowdisplayskip}{0pt}
\begin{flalign}
\text{$\mathcal{S}^c$:} \notag \\
\text{min } &\sum_{r\in R}\texttt{length\_of}(\phi_{r}) + \sum_{n\in N}w_{n}\cdot \tau_{n} && \notag &&\\
&\text{subject to:} && \notag &&\\
&\texttt{Cumulative}((\texttt{start\_of}(\pi_{s})|origin_{s} = j), (\texttt{size\_of}(\pi_{s})|origin_{s}=j), 1, K_{j}) &&\forall j\in J \label{eq:uc2}&&\\
&\alpha_{ns} = 1 \rightarrow \texttt{end\_of}(\pi_{s}) \leq shipment_{n} && \forall n\in N, s\in \{S^{-}|\hat{f}_{s} = 1\} \label{eq:uc3}&&\\
&\sum_{r\in R}\beta_{nr}\cdot \texttt{presence\_of}(\phi_r) = 1 && \forall n\in N && \label{eq:uc4} \\
&\texttt{presence\_of}(\phi_{r}) = \texttt{presence\_of}(o_{r_{i}}) && \forall r\in R, i\in \{1, |r|\} && \label{eq:uc5} \\
&\texttt{start\_of}(\phi_{r}) = \texttt{start\_of}(o_{r_{0}}) && \forall r\in R && \label{eq:uc6} \\
&\texttt{end\_of}(\phi_{r}) = \texttt{end\_of}(o_{r_{|r|}}) && \forall r\in R && \label{eq:uc7} \\
&\texttt{presence\_of}(o_{r_{i}})\rightarrow \texttt{end\_of}(o_{r_{i-1}}) \leq \texttt{start\_of}(o_{r_{i}}) && \forall r\in R, i\in \{2, |r|\} && \label{eq:uc8} \\
&\texttt{presence\_of}(o_{r_{i}}) \rightarrow T_{r_{i}} \geq \frac{\texttt{end\_of}(o_{r_{i}}) - t_{r_{i}}}{24} && \forall r\in R, i\in \{2, |r|\} && \label{eq:uc9} \\
&\tau_{n} = T_{n}^2 && \forall n\in N && \label{eq:uc10} \\
&\texttt{Cumulative}((\texttt{start\_of}(\phi_{r})), (\texttt{length\_of}(\phi_{r})), 1, K_{l}) &&\forall l\in L \label{eq:uc11}&&\\
& && \notag \\
&\pi_{s} : \texttt{intervalVar}([0, M], [0, M], duration_{s}) && \forall s\in \{S^-|\hat{f}_{s} = 1\} \notag &&\\
&\phi_{r} : \texttt{intervalVar}([t^{route}_{r}, M], [t^{route}_{r}, M], \texttt{optional}) && \forall r\in R \notag &&\\
&o_{r_{i}} : \texttt{intervalVar}([t^{route}_{r}, M], [t^{route}_{r}, M], c_{r_{i-1}r_{i}}, \texttt{optional}) && \forall r\in R, i\in \{2, |r|\} \notag &&\\
&T_{n} \in \{0, M\} && \forall n\in N \notag &&\\
&\tau_{n} \in \{0, M\} && \forall n\in N \notag &&\\
& && \notag &&
\end{flalign}
\endgroup

Constraints (\ref{eq:uc2}) and (\ref{eq:uc3}) address to the resource-allocation problem of the first-mile routes. Let $\{s\in S^-|\hat{f}_{s} = 1\}$ be the set of selected first-mile routes of the solution of the master problem and $\pi_{s}$ be interval variables, starting at $\texttt{start\_of}(\pi_{s})\in [0, M]$ and ending at $\texttt{end\_of}(\pi_{s})\in [0, M]$, after $\texttt{size\_of}(\pi_{s}) = duration_{s} = arrival_{s} - departure_{s}$. Considering a limited number of $K_{j}$ trucks for DC hub $j$, we apply the $\texttt{Cumulative}$ predicate (\ref{eq:uc2}) \cite{Cumul93}, to ensure that no more than $K_{j}$ routes $s$ will be conducted simultaneously. By (\ref{eq:uc3}), if $shipment_{n}$ is the departure time of the intermodal service that ships order $n$ from the DC hub $j$, the arrival time of the first-mile route that serves $n$ must precede.

The interval variables $\phi_{r}$ indicate the start time and the end time of route $r$, if that route is selected, assuming that $t^{route}_{r}$ is the latest arrival time of the services that shipped the involved orders. The start time of each interval variable $o_{r_{i}}$ indicates the departure time from node $i-1$ and their end time is the arrival time to node $i$. Variables $T_{n}$ and $\tau_{n}$ define the number of days of delayed delivery of order $n$ and its squared value, respectively. $M$ is a big number, used for an upper bound of the domains of variables, and $K_{l}$ is the number of available last-mile routing vehicles in Satellite hub $l\in L$.

Constraints (\ref{eq:uc4}) ensure that each order will be served by exactly one route. Constraints (\ref{eq:uc5}) ensure that if the interval variable of a route $r$ is present, then all interval variables of the nodes of the route will be present too. Constraints (\ref{eq:uc6}) and (\ref{eq:uc7}) ensure that the start time of route $r$ is equal to the departure from the hub and the end time is equal with the arrival to the hub. Moreover, Constraints (\ref{eq:uc8}) ensure that if $o_{r_{i}}$ is present, then the arrival to node $i-1$ will be precedent of the departure from $i-1$ to serve node $i$. Constraints (\ref{eq:uc9}) and (\ref{eq:uc10}) define the number of days of delayed delivery and the squared value respectively. Finally, we consider a \texttt{Cumulative} constraint in (\ref{eq:uc11}), to ensure that all vehicles are scheduled, considering their availability.

As the objective value of each subproblem $\mathcal{S}^{s}$ is denoted by $\zeta_s$, let $\zeta_{l}$ be the cost contribution of orders $N^{r-1}_{l}$ to the objective value of $\mathcal{S}^c$.\\

\textbf{Reformulations of \textit{hub-and-spoke} subproblems.} To integrate the nonlinear penalty cost function, we reformulate variables $T_{n}$ and the constraints in which they are involved in $\mathcal{S}^{s}$. As the cost function is quadratic, we opt for a regular linearization technique, using binary variables. Let $\tau_{nd}$ be equal to 1 if order $n\in N$ is delivered $d$ days later than its deadline $t_{n}$. For each order $n$, only one variable $\tau_{nd}$ if fixed to 1:

\begingroup
\selectfont \setlength{\abovedisplayskip}{0pt}\setlength{\belowdisplayskip}{0pt}
\begin{flalign}
&\sum_{d = 0, ..., M}\tau_{nd} = 1 && \forall n\in N && \label{eq:uc100} \\
&\tau_{nd}\in \{0, 1\} && \forall n\in N, d = 0, ..., M \notag &&\\
& && \notag &
\end{flalign}
\endgroup

To ensure that $\tau_{nd}$ will be equal to 1 only if $d$ is greater than the delay of $n$, we replace (\ref{eq:s7}) with:

\begingroup
\selectfont \setlength{\abovedisplayskip}{0pt}\setlength{\belowdisplayskip}{0pt}
\begin{flalign}
&\sum_{d = 0, ..., M}d\cdot \tau_{nd} \geq \frac{C_{n} - t_{n}}{24} && \forall n\in N && \label{eq:uc101} \\
& && \notag &
\end{flalign}
\endgroup

Last, the objective function of $\mathcal{S}^s$ considers the nonlinear costs, by replacing (\ref{eq:Sobj}) with:

\begingroup
\selectfont \setlength{\abovedisplayskip}{0pt}\setlength{\belowdisplayskip}{0pt}
\begin{flalign}
&\zeta_{s}\geq C^{max} + \sum_{n\in N_{s}}\sum_{d = 0, ...,M}w_{n}\cdot d^{2}\cdot \tau_{nd} &&  && \label{eq:uc102} \\
& && \notag &
\end{flalign}
\endgroup

Note that $M$ indicates a sufficiently large number, so that no infeasibilities occur.\\

\textbf{Feasibility and Optimality Cuts.} Before we describe the modifications of cuts (\ref{eq:c1})-(\ref{eq:c2}), let us note that the formulation of $\mathcal{S}^{c}$ may imply infeasibilities for a solution of $\mathcal{M}$. If the number of available trucks in DC hubs is not sufficient for the selected first-mile routes, Constraints (\ref{eq:uc2}) yield an infeasibility.

If this is the case, a set of inequalities must secure that this solution will not be repeated in the next iterations of the algorithm, thus forcing feasibility. Let us describe these cuts.

Using the notation of \cite{Nutmeg20}, let $1_{\hat{\textsc{x}}}^{r-1}$ be the set of variables $f_{s}^r$ and $h_{ns}^r$ that receive value $1$ when solving $\mathcal{M}$ in iteration $r-1$: $1_{\hat{\textsc{x}}}^{r-1} = \{h_{ns}^{r}, f_{s}^{r}|\hat{h}_{ns}^{r-1} = 1, \hat{f}_{s}^{r-1} = 1\}$. A valid feasibility cut would force $\mathcal{M}$ to prohibit the assignment of at least one variable of $1_{\hat{\textsc{x}}}^{r-1}$. If $|1_{\hat{\textsc{x}}}^{r-1}|$ is the length of the set, the cut has the following form (see also \cite{Hook07} and examples in \cite{Nutmeg20}):

\begin{flalign}
\sum_{s\in 1_{\hat{\textsc{x}}}^{r-1}}f_{s}^{r} + \sum_{(n, s)\in 1_{\hat{\textsc{x}}}^{r-1}} h_{ns}^{r} \leq |1_{\hat{\textsc{x}}}^{r-1}| - 1 && \label{eq:fc}
\end{flalign}

The iterative addition of ($\ref{eq:fc}$) to $\mathcal{M}$, if $\mathcal{S}^{c}$ is infeasible, will lead to either a solution of $\mathcal{M}$ that implies a feasible solution of $\mathcal{S}^{c}$ or an infeasibility of $\mathcal{M}$ too.

As for the Optimality cuts, we keep on using (\ref{eq:c1}) for all used services that are followed by \textit{hub-and-spoke} deliveries. In addition, we construct a bounding function for each Satellite hub $l\in L$, which is the destination of services that precede \textit{connected-hubs} deliveries:

\begin{equation}
	B^{r}_{l}(\hat{h}_l^r) = 
\begin{cases}
	\zeta^{r}_{l} & \text{if } \{n\in N^{r-1}_{l}|\hat{h}_{ns^{r-1}_{n}}^{r} = 0\} = \varnothing \\
	0 & \text{otherwise} \label{eq:Bl}
	\end{cases}
\end{equation}
$s^{r-1}_{n}$ being the service in $\delta^-(l)$ for which $\hat{h}^{r-1}_{ns^{r-1}_{n}} = 1$, and $N^{r-1}_{l} = \{n\in N|dest_{s^{r-1}_{n}} = l\}$. As Properties $P_{1}$ and $P_{2}$ hold for a global bounding function $B^{r}(\textsc{x}_{r}) = \sum_{s\in \{S_{L}|\hat{v}^{r-1}_{s} = 1\}}B^{r}_{s}(h^{r}_{s}) + \sum_{l\in L}B^{r}_{l}(h^{r}_{l}) + f(\textsc{x}_r)$, Theorem \ref{theo:1} is valid for the following optimality cuts:

\begingroup
\selectfont \setlength{\abovedisplayskip}{0pt}\setlength{\belowdisplayskip}{0pt}
\begin{flalign}
&z^{r}_{l} \geq \zeta^{r}_{l} - \zeta^{r}_{l}\cdot(|N^{r-1}_{l}| - \sum_{n\in N^{r-1}_{l}}h_{ns^{r-1}_{n}}) && \forall l\in L && \label{eq:c3} \\
&z \geq \sum_{s\in \{S_{L}|\hat{v}^{r-1}_{s} = 1\}}z^{r}_{s} + \sum_{l\in L}z^{r}_{l} + f(\textsc{x}_{r}) && && \label{eq:c4}\\
& && \notag &
\end{flalign}
\endgroup

Cut (\ref{eq:c4}) replaces inequality (\ref{eq:c2}) at each iteration $r$, hence the lower bound $z$ will be equal to the upper bound, if a previously computed solution is globally optimal.\\

\textbf{A new relaxation for the master problem.} As mentioned before, the lower bound of the last-mile transportation costs is still valid for \textit{connected-hubs} deliveries. However, as the cost function of delay penalties differs, relaxation (\ref{eq:r2}) must be reformed.

Let $T_{n}$ be the number of days of delay and $\tau_{n} = T_{n}^{2}$. As the penalty costs are equal to $w_{n} \cdot \tau_{n}$, we propose the following substitute of (\ref{eq:r2}):

\begingroup
\selectfont \setlength{\abovedisplayskip}{0pt}\setlength{\belowdisplayskip}{0pt}
\begin{flalign}
& \tau^{\star}_{n} \geq \Bigl\lceil{\frac{arrival_{s} - t_{n}}{24}}\Bigr\rceil \cdot \Bigl\lceil{|\frac{arrival_{s} - t_{n}}{24}|}\Bigr\rceil \cdot h_{ns} && \forall n\in N, s\in \{S^{+}|dest_{s}\in L\} \label{eq:r4} &&\\
& && \notag &&
\end{flalign}
\endgroup

and the respective reformulation of (\ref{eq:r3}):

\begin{flalign}
&z \geq \sum_{s\in S}(c^{travel}_{s}\cdot V_{s} + c^{fixed}_{s}\cdot v_{s}) + \sum_{n\in N}w_{n}\cdot \tau^{\star}_{n} + \sum_{s\in S_{L}}C^{\star}_{s} && \label{eq:r5} \\
& && \notag &&
\end{flalign}

Now, to eliminate a range of solutions that will obviously conflict with constraints (\ref{eq:uc2}) and (\ref{eq:uc3}) of $\mathcal{S}^c$, implying an infeasibility, we also consider the following set of inequalities for $\mathcal{M}$:

\begingroup
\selectfont \setlength{\abovedisplayskip}{0pt}\setlength{\belowdisplayskip}{0pt}
\begin{flalign}
&\sum_{s_{2}\in \{\delta^+(j^{n})|departure_{s_{2}} \geq arrival_{s_{1}}\}}x_{g{s_2}} + 1\geq \alpha_{ns_{1}}\cdot f_{s_{1}} + y_{ng} && \forall n\in N, s_{1}\in S^{-}, g\in G_{j^n} && \label{eq:r6}\\
& && \notag &&
\end{flalign}
\endgroup

Let $j^{n}\in J$ be the dedicated DC hub of order $n\in N$. Cut (\ref{eq:r6}) secures the succession of the first-mile and the intermodal transportation stages, assuming that the number of first-mile routing vehicles is infinite (i.e., the arrival times of routes in $S^-$ are fixed). If order $n$ is collected by route $s_{1}\in S^-$, unloaded to $j^{n}$ at $arrival_{s_{1}}$, and assigned to compartment $g\in G_{j^n}$, then $g$ must be shipped by a service that departs later than $arrival_{s_{1}}$. (\ref{eq:r6}) are in synergy with constraints (\ref{eq:3}) and (\ref{eq:4}), to ensure that $\sum_{s_{2}\in \{\delta^+(j^{n})|departure_{s_{2}} \geq arrival_{s_{1}}\}}x_{g{s_2}} \leq 1$.\\

\textbf{Modifying Algorithm \ref{alg:bda}.} Most of Algorithm \ref{alg:bda} remains intact. Nevertheless, we induce a feasibility check of solutions $\hat{\textsc{x}}$, by solving subproblem $\mathcal{S}^{c}$ before $\mathcal{S}^{s}$, so that, if the subproblem is infeasible, no time will be wasted on redundantly solving parts of it. If $\mathcal{S}^{c}$ is infeasible, the optimality cuts generation process is held off, and the induction of a feasibility cut (\ref{eq:fc}) to $\mathcal{M}$ is prioritized.

If the master problem is infeasible (i.e., all feasible solutions implied infeasibilities to subproblems at previous iterations), then problem $\mathcal{P}$ is globally infeasible and the Algorithm terminates, without returning any solutions.\\

\textbf{Experimentation of real instances.} We implement the extension of Algorithm \ref{alg:bda} on several real datasets of the 3PL provider. As a monolithic MILP model which integrates all three stages cannot find even a feasible solution on any available instance, the results of LBBD are compared with the costs that the actual routing of the logistics provider planned. As the complexity of the newly added subproblem does not allow a fast solution, we impose a timelimit of 900 seconds for $\mathcal{S}^c$ too. The early termination conditions (i.e., 900 seconds for the master problem, a limit of 20 iterations, a limit of $1\%$ for the value of \textsc{Gap}) still hold.

Each service is featured with a fixed cost and a cost per compartment, which is equal to the duration of the service in hours. The fixed cost $c^{fixed}_{s}$ lays in the range of \euro$417 - 2179$. Roadway services can carry $Q_{s} = 20$ compartments at most, railway services carry 30 compartments at the time and seaway services carry either 60 or 100 compartments per route. Each compartment has a fixed capacity of $q = 27000$ payweight units (a case-specific unit, related with the volume, the weight, and the loading meter of orders). Roadway transport can be carried out by either single-driver' or double-drivers' vehicles. In the former case, the driver takes a break of $9$ hours after a shift of $9$ consecutive hours of driving; thus the route can be performed without breaks only for double-driver vehicles since each driver is resting during the other one's shift. For the first-mile routes, the number of available single-compartment trucks $K_j$ is $19$ for the DC hubs in Katowice and Ostrava and $35$ for the hub in Budapest. The Satellite hubs are served by $K_l =$ 30 vehicles, that carry out the last-mile routes. Last, the importance weight $w_{n}$ of each order $n\in N$ is between $1$ and $50$, multiplied by the cost of \euro100.

All orders that were submitted from Monday to Sunday are concentrated into a single instance. The routing of these orders begins on Monday of the next week. Each weekly instance is split into $168$ time slots, one per hour of that week. The number of orders $|N|$ per instance ranges from $41$ to $142$. A set of $|S^+| = 195$ inter-regional transport services includes a fixed timetable of routes that concerns a period of $10$ days after the week of the given instance. Each order of this dataset has an average deadline of $15$ days to be delivered.

\begin{table}[h!]
\centering
{
\begin{tabular}{|c|r|r|r|r|}
\hline
\textsc{Instance} & \textsc{Time} & \textsc{Lower Bound} & \textsc{Upper Bound} & \textsc{Gap} \\ \hline
1                 & 31,600        & 9,379                 & 9,684                 & 3.15\%         \\ \hline
2                 & 27,920        & 5,896                 & 6,030                 & 2.23\%        \\ \hline
3                 & 15,140        & 12,140                & 12,416                & 2.22\%         \\ \hline
4                 & 32,480        & 7,096                 & 7,366                 & 3.66\%         \\ \hline
5                 & 25,500        & 6,496                 & 6,778                 & 4.17\%         \\ \hline
6                 & 17,420        & 6,234                 & 6,702                 & 6.98\%         \\ \hline
7                 & 28,680        & 6,391                 & 6,457                 & 1.02\%         \\ \hline
8                 & 24,900        & 6,213                 & 6,407                 & 3.03\%         \\ \hline
9                 & 15,640        & 6,651                 & 6,767                 & 1.71\%         \\ \hline
10                & 27,480        & 6,908                 & 7,067                 & 2.26\%         \\ \hline
11                & 21,460        & 9,392                 & 11,129                & 15.61\%        \\ \hline
12                & 29,140        & 6,396                 & 6,943                 & 7.87\%         \\ \hline
\end{tabular}
}
\caption{Results of the Extended Algorithm \ref{alg:bda} for the real instances}
\label{tab:real_results}
\end{table}

Table \ref{tab:real_results} presents the performance of our method. We notice that the computed \textsc{Gaps} are steadily much lower than 10\% (with an exception for the $11^{th}$ instance). Compared to the actual solution that was implemented, the solution of Algorithm \ref{alg:bda} implies increased first-mile costs, however, the remarkable decrease of the cost for the intermodal transportation stage and the shortening of the tardiness cost lead to an improved solution, as presented in Table \ref{tab:comparison}.

Thus, we conclude that our approach can solve instances of realistic size close to optimality and indeed improves current practice, which implements a continuous consolidation and imminent shipment of collected orders (at least to the best of our understanding). Regardless of how the current planning is implemented, our approach can solve weekly instances close to optimality in reasonable time, thus being appropriate for periodic re-planning or any kind of more versatile decision support.

\begin{table}[h!]
\centering
{
\begin{tabular}{|l|r|r|r|r|r|}
\hline
                & First-mile & Intermodal Transportation & Last-mile & Penalty & Total \\ \hline
Extended Algorithm \ref{alg:bda}     & 15,932   & \textbf{64,801}     & 2,056   & \textbf{6,402} & \textbf{89,191}   \\ \hline
Actual Solution & \textbf{6,437}   & 81,544   & \textbf{1,902}    & 8,500 & 98,383  \\ \hline
\end{tabular}
}
\caption{Cost Splitting Elaboration}
\label{tab:comparison}
\end{table}

\section{Conclusions}
Motivated by the significant impact of delays on logistics and the lack of novel exact optimizing methods in literature for the unified supply chain optimization problem, we attempted to fill up these gaps by proposing a Logic-Based Benders Decomposition and proving its valid convergence to the exact optimal solution using a variety of cuts, relaxations and subproblems. Despite the complexities that the integration of the last-mile stage and the due-date related objectives adds on, the proposed method performs well on instances of comparable size with the ones presented in relevant literature, most of them being restricted to the intermodal transportation stage only.

The applicability of our method under real conditions is evaluated by an extended variant of our generic framework, implemented on a real case of a 3PL provider. This extension also integrates the first-mile stage of the Supply Chain, as alternative, more complex, last-mile delivery scenarios are also considered. Although the performance of the presented extension relies on case-specific assumptions that significantly reduce the scale of the original three-stages problem, the imposed restrictions are still realistic and reasonable. The computation of improved solutions than the actual routings that were planned in practice shows that unified approaches to optimize the entire supply chain are viable for future research.

The effect of stochasticity on route planning is an important aspect that has motivated several recent studies (e.g., \cite{Mul21,Zw21}). The main obstacle here is that realistic datasets even just for the middle-mile cannot be handled by exact methods, thus \cite{Mul21,Zw21} offer efficient heuristics. As our study focuses on exact optimizing methods, it appears that coping with stochasticity is an important, yet appealing, challenge.
\linebreak

\textbf{Acknowledgments. }This research has been supported by the EU through the COG-LO Horizon 2020 project (\url{http://www.cog-lo.eu/}), grant number 769141.

\newpage
\end{document}